\documentclass[11pt,a4paper,reqno]{amsart}
\usepackage{amsmath,amssymb,amsfonts,epsfig,mathrsfs,cite, hyperref}
\usepackage[T1]{fontenc}
\usepackage{color}
\usepackage{array}
\usepackage{amsthm}
\usepackage{amstext}
\usepackage{graphicx}
\usepackage{setspace}

\usepackage[title]{appendix}

\makeatletter
\@namedef{subjclassname@2020}{%
  \textup{2020} Mathematics Subject Classification}
\makeatother

\usepackage[margin=2.5cm]{geometry}
\usepackage{color}
\usepackage{enumitem}
\usepackage{amscd,psfrag}
\usepackage{yhmath}
\usepackage[mathscr]{eucal}

\usepackage{comment}

\allowdisplaybreaks[4]

\usepackage{slashed}

\makeatletter
\pdfpageheight\paperheight
\pdfpagewidth\paperwidth

\usepackage{epstopdf}

\usepackage{indentfirst}	

\usepackage[normalem]{ulem}
\theoremstyle{plain}

\newtheorem{definition}{Definition}
\newtheorem{theorem}[definition]{Theorem}
\newtheorem*{theorem*}{Theorem}

\newtheorem{remark}[definition]{Remark}

\newtheorem*{remark*}{Remark}
\newtheorem*{sideremark*}{Side Remark}

\newtheorem*{claim*}{Claim}
\newtheorem*{q*}{Question}
\newtheorem{lemma}[definition]{Lemma}

\newtheorem*{corollary*}{Corollary}

\newtheorem{proposition}[definition]{Proposition}

\newcommand{\R}{\mathbb{R}}

\newcommand{\na}{\nabla}
\newcommand{\emb}{\hookrightarrow}

\newcommand{\p}{\partial}

\newcommand{\e}{\varepsilon}

\newcommand{\dd}{{\rm d}}

\newcommand{\G}{\Gamma}
\newcommand{\linf}{{L^\infty}}

\newcommand{\tor}{{\mathbf{T}^2}}
\newcommand{\brator}{{\left(\tor\right)}}
\newcommand{\etwo}{{\mathbf{e}_2}}
\newcommand{\bes}{\mathcal{B}}

\newcommand{\bigt}{{\mathscr{B}_t}}
\newcommand{\smallt}{{\mathscr{S}_t}}

\newcommand{\tnu}{{\widetilde{\nu}}}

\newcommand{\ttnu}{{\widetilde{\tnu}}}

\def\XXint#1#2#3{{\setbox0=\hbox{$#1{#2#3}{\int}$ }
\vcenter{\hbox{$#2#3$ }}\kern-.6\wd0}}

\newcommand{\area}{{\rm Area}}

\title{Inviscid Limit for Yudovich solution to heat conductive Boussinesq equation on two-dimensional periodic domain}

\author{Siran Li}

\address{Siran Li: School of Mathematical Sciences $\&$ CMA-Shanghai, Shanghai Jiao Tong University, No.~6 Science Buildings,
800 Dongchuan Road, Minhang District, Shanghai, China (200240)}

\email{\texttt{siran.li@sjtu.edu.cn}}

\keywords{Boussinesq equation, inviscid limit, Yudovich solution}

\subjclass[2020]{35Q35; 76D03; 76D09}
\date{\today}

\begin{document}

\begin{abstract}

We establish the inviscid limit of the Yudovich solution to the heat conductive Boussinesq equation with initial velocity and temperature/buoyancy in $L^2$ and initial vorticity in $\linf$ on the two-dimensional periodic domain $\tor$. Given any finite time $T>0$ and $p \in [1,\infty[$, we show that the solution to the diffusive Boussinesq equation converges in $\linf\left(0,T; W^{1,p}\brator\right)$ to the solution to the Euler--Boussinesq equation as the viscosity tends to zero, provided that the initial vorticity, velocity, and temperature/buoyancy converge strongly in $L^2$. Our proof adapts and extends the arguments in [P. Constantin, T.~D. Drivas, and T.~M. Elgindi, \emph{Comm. Pure Appl. Math.} \textbf{75} (2022), 60--82] to forcing terms in  $L^1\left(0,T; \linf\brator\right)$.

\end{abstract}
\maketitle

\section{Introduction}\label{sec: intro}

We are concerned with the inviscid limit of the viscous, heat-conductive (or buoyancy-driven) Boussinesq equations on the two-dimensional periodic domain $\tor$: 
\begin{equation}\label{eq: bkn}
    \begin{cases}
\p_t u^\nu + u^\nu\cdot\na u^\nu - \nu \Delta u^\nu + \na P^\nu = \theta^\nu \etwo,\\
\p_t \theta^\nu + u^\nu \cdot \na \theta^\nu - \kappa\Delta\theta^\nu = 0,\\
\na \cdot u^\nu = 0 \qquad \text{in } [0,T] \times \tor,\\
(u^\nu, \theta^\nu) = (u^\nu_0, \theta^\nu_0)\qquad \text{at } \{0\}\times \tor.
    \end{cases}
\end{equation}
By setting $\nu=0$ in the Boussinesq equation~\eqref{eq: bkn} above, we formally obtain the following system, referred to as the Euler--Boussinesq equation in this work: 
\begin{equation}\label{eq: bk0}
    \begin{cases}
\p_t u + u\cdot\na u + \na P = \theta \etwo, \\    
\p_t \theta + u\cdot \na \theta - \kappa\Delta\theta = 0,\\
\na \cdot u = 0 \qquad \text{in } [0,T] \times \tor,\\
(u, \theta) = (u_0, \theta_0)\qquad \text{at } \{0\}\times \tor.
    \end{cases}
\end{equation}
We shall prove the strong convergence $u^\nu \to u$ in $\bigcap_{1\leq p < \infty}\linf\left(0,T; W^{1,p}\brator\right)$ as $\nu \searrow 0$, where $(u,\theta)$ is the unique Yudovich weak solution (see Theorem~\ref{thm: DP} below) to the Euler--Boussinesq equation~\eqref{eq: bk0}, provided that $(u^\nu_0, \theta_0^\nu) \to (u_0, \theta_0)$ and $\omega^\nu_0 := {\rm rot}(u^\nu_0) \to \omega_0:={\rm rot}(u_0)$ in $L^2\brator$.

The Boussinesq equations~\eqref{eq: bkn}, \eqref{eq: bk0} are of physical significance in the modelling of atmospheric and/or oceanographic turbulence; \emph{cf.} Pedlosky \cite{ped}. The variables $u^\nu$, $u$ are the velocity fields, $P^\nu$, $P$ are the pressure fields, and $\theta^\nu \etwo$, $\theta \etwo$ are the vertical forces (where $\etwo = [0,1]^\top$) generated by heat conduction or buoyancy in fluid flows. The positive parameters $\nu$ and $\kappa$ in \eqref{eq: bkn} represent the coefficients of viscosity and heat conductivity.

The qualitative behaviours of large-amplitude solutions to the two-dimensional Boussinesq equations have been a central, extensively studied topic in the literature of mathematical fluid dynamics. We refer the reader to
\begin{itemize}
    \item 
\cite{5,17,18,19,20,22,27,37,51,53,x} for local well-posedness, breakdown criteria, explicit solutions, and singularity formation for the degenerate case ($\nu=\kappa=0$);
\item 
\cite{1,2,3,4,14,15,21,32,33,34,35,36,38,39,44,45,46,48,chae, DP1, DP2, zhao, 49, a1} for global well-posedness and regularity for partially degenerate cases ($\nu>0, \kappa=0$ or $\nu=0, \kappa>0$);
\item 
\cite{41,42,47,52,59,60,61,b1} for well-posedness and regularity with (super-)critical dissipation; and
\item
\cite{10,dwzz,44,54,58,zhao,c1,c2,c3,c4,c5,c6,c7,c8,c9,c10,c11,c12} for large-time behaviours.
\end{itemize}
The above list of literature is by no means exhaustive.

The inviscid limit of the Boussinesq equation~\eqref{eq: bkn} is an important mathematical problem. Moffatt pointed out the significance of studying whether having $\nu>0$ or $\kappa>0$ precludes the finite-time formation of singularities \cite{mof}. Danchin--Paicu~\cite{DP0} proved that for $\nu>0$ and $\kappa=0$, no singularity is formed for finite-energy initial data. The case $\nu=0$ and $\kappa>0$ (\emph{i.e.}, the Euler--Boussinesq equation~\eqref{eq: bk0}), however, encompasses considerably more difficulty. Chae \cite{chae} established the global well-posedness of~\eqref{eq: bk0} in $H^s$ for $s \geq 3$ by energy considerations together with the endpoint Sobolev embedding inequality \emph{\`{a} la} Brezis--Wainger, and it has been extended to rough data in Hmidi--Keraani \cite{33} for $(u_0, \theta_0) \in \bes^{1+2/p}_{p,1} \times L^r$ with some $2 <r \leq p \leq \infty$.\footnote{An extra technical condition is needed for $p=r=\infty$.}

In line with~\cite{chae, 33}, by exploiting delicate Littlewood--Paley estimates, Danchin--Paicu (2009) established the following \cite[Theorem~1]{DP}:\footnote{Indeed, the theorem is proved for the fluid domain being the whole plane $\R^2$, with initial data $\theta_0 \in L^2 \cap \bes^{-1}_{\infty,1}$, $u_0 \in L^2$, $\omega_0\in L^r\cap L^\infty$ for some $r \in [2,\infty[$ and solution $\theta \in C^0_t \left(L^2 \in \bes^{-1}_{\infty,1}\right)_x \cap L^2_t H^1_x \cap L^1_t \left(\bes^1_{\infty,1}\right)_x$, $u \in C^{0,1}_t L^2_x$, and $\omega \in L^\infty_t (L^r\cap\linf)_x$. As commented on \cite[p.1]{DP}, ``our results extend with no difficulty to periodic boundary conditions, though''. }
\begin{theorem}\label{thm: DP}
Fix any $T>0$. Consider the Euler--Boussinesq Equation~\eqref{eq: bk0} in $[0,T]\times\tor$ with initial data $(u_0,\theta_0) \in L^2 \times L^2$ and $\omega_0 \in L^\infty$. There exists a unique solution $(u,\theta)$ such that
\begin{align*}
    u \in C^{0,1}_t L^2_x, \quad \omega \in L^\infty_t L^\infty_x,\quad\text{and }  \theta \in C^0_t L^2_x \cap L^2_t H^1_x \cap L^1_t \left(\bes^1_{\infty,1}\right)_x.
\end{align*}
\end{theorem}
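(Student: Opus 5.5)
This is the Danchin--Paicu theorem, which the paper quotes rather than proves. I would prove it by Yudovich's strategy: global a priori estimates, existence by regularization and compactness, and uniqueness by an $L^2$ energy estimate with an Osgood argument. The only new ingredient compared with 2D Euler is the forcing $\theta\etwo$.

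\textbf{A priori bounds.} The temperature equation is linear heat transport by a divergence-free field. Hence
\[
\|\theta(t)\|_{L^2}^2+2\kappa\int_0^t\|\na\theta\|_{L^2}^2 = \|\theta_0\|_{L^2}^2,
\]
and the maximum principle bounds $\|\theta\|_{L^p}$ for every $p$ by the initial value. The velocity energy identity $\tfrac{d}{dt}\|u\|_{L^2}\le\|\theta\|_{L^2}$ gives $u\in L^\infty_tL^2_x$. Using the equation, $\p_t u\in L^\infty_t L^2_x$ (taken in the weak sense), so $u\in C^{0,1}_tL^2_x$. The vorticity solves
\[
\p_t\omega+u\cdot\na\omega=\p_1\theta,
\]
so $\|\omega(t)\|_{L^\infty}\le\|\omega_0\|_{L^\infty}+\int_0^t\|\p_1\theta\|_{L^\infty}$. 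This is where the real work lies: one needs $\na\theta\in L^1_tL^\infty_x$, which is why $\theta\in L^1_t(\bes^1_{\infty,1})_x$ appears in the statement.

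\textbf{Main obstacle: closing the estimate for $\|\na\theta\|_{L^1_tL^\infty_x}$.} I would localize the $\theta$-equation with Littlewood--Paley blocks $\Delta_j$. Each block satisfies a transport--diffusion equation with a commutator $[u\cdot\na,\Delta_j]\theta$. Heat smoothing gives roughly
\[
\|\Delta_j\theta\|_{L^1_tL^\infty}\lesssim 2^{-2j}\bigl(\|\Delta_j\theta_0\|_{L^\infty}+\|\text{commutator}\|_{L^1_tL^\infty}\bigr).
\]
Since $\theta_0$ is only in $L^2$, I would use the instantaneous parabolic gain from $L^2$ (Bernstein: $L^2\to L^\infty$ costs $2^{j}$). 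So I would start from a small time $t_0$, where $\theta(t_0)\in H^1$ almost surely by the energy bound, or work in $\bes^{-1}_{\infty,1}$, which contains $L^2$ on $\tor$ up to low frequencies. The commutator is controlled by $\|\na u\|_{L^\infty}$, which is not available; only the log-Lipschitz bound
\[
\|\na u\|_{L^p}\lesssim p\|\omega\|_{L^\infty}
\]
holds. I would follow Hmidi--Keraani and Danchin--Paicu and use a Lagrangian change of variables, or a logarithmic interpolation, to get
\[
\|\theta\|_{L^1_t\bes^1_{\infty,1}}\le C(1+t)\bigl(1+\|\omega\|_{L^1_tL^\infty}\bigr)\log\bigl(e+\|\omega\|_{L^\infty_tL^\infty}\bigr),
\]
or some bound sublinear in $\|\omega\|_{L^\infty}$. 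Feeding this into the vorticity bound and applying Gronwall/Osgood then gives global bounds for $\|\omega\|_{L^\infty_tL^\infty}$ and $\|\theta\|_{L^1_t\bes^1_{\infty,1}}$ on $[0,T]$. I expect this bootstrap to be the hard part.

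\textbf{Existence and uniqueness.} For existence, I would mollify the data, solve globally using Chae's $H^s$ theory, and pass to the limit. The uniform bounds above together with Aubin--Lions give strong $L^2$ convergence, which is enough for the nonlinear terms. For uniqueness, take two solutions and set $\delta u=u_1-u_2$, $\delta\theta=\theta_1-\theta_2$. The $\delta\theta$ estimate reads
\[
\tfrac12\tfrac{d}{dt}\|\delta\theta\|_{L^2}^2+\kappa\|\na\delta\theta\|_{L^2}^2\le\|\delta u\|_{L^2}\|\na\theta_2\|_{L^\infty}\|\delta\theta\|_{L^2},
\]
which is integrable in time because $\na\theta_2\in L^1_tL^\infty_x$. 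The $\delta u$ estimate uses the Yudovich bound
\[
\int|\delta u|^2|\na u_2|\le\|\na u_2\|_{L^p}\|\delta u\|_{L^\infty}^{2/p}\|\delta u\|_{L^2}^{2-2/p}
\]
with $\|\na u_2\|_{L^p}\le Cp$. Optimizing in $p$ gives an Osgood modulus $r\log(1/r)$ for $X=\|\delta u\|_{L^2}^2+\|\delta\theta\|_{L^2}^2$. Since $X(0)=0$, this forces $X\equiv0$.
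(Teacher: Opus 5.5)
The gap is in the step you call the main obstacle, and it comes before any commutator or log-Lipschitz difficulty. From $\theta_0\in L^2$ alone you cannot get $\na\theta\in L^1_tL^\infty_x$, so the bound $\|\omega(t)\|_{L^\infty}\le\|\omega_0\|_{L^\infty}+\int_0^t\|\p_1\theta\|_{L^\infty}$ never closes. Both devices you offer fail.

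First, $\bes^{-1}_{\infty,1}$ does not contain $L^2\brator$. Bernstein gives $2^{-j}\|\Delta_j\theta_0\|_{L^\infty}\lesssim\|\Delta_j\theta_0\|_{L^2}$, and the $\ell^1$-sum in $j$ of the right side is the $\bes^0_{2,1}$-norm. So only $L^2\emb\bes^{-1}_{\infty,2}$ holds, and the defect is at high frequencies, where the torus changes nothing.

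Second, restarting at a time $t_0>0$ with $\theta(t_0)\in H^1$ leaves $[0,t_0]$ untreated. There the parabolic gain is exactly critical in two dimensions: $\|\na e^{\kappa t\Delta}\theta_0\|_{L^\infty}\lesssim(\kappa t)^{-1}\|\theta_0\|_{L^2}$, which is not integrable at $t=0$. This is sharp. Take $u\equiv0$, which is an admissible field for your linear estimate, and set $\widehat{\theta_0}(k)=-\mathrm{i}k_1/(j4^j)$ for $2^j\le|k|<2^{j+1}$, $j\ge1$. Then $\theta_0$ is real-valued and $\|\theta_0\|_{L^2}^2\simeq\sum_j j^{-2}<\infty$, but
\[
\int_0^T\p_1\theta(t,0)\,\dd t=\sum_{j\ge1}\frac{2\pi}{j4^j}\sum_{2^j\le|k|<2^{j+1}}\frac{k_1^2}{|k|^2}\,\frac{1-e^{-4\pi^2\kappa T|k|^2}}{4\pi^2\kappa}\gtrsim\frac{1}{\kappa}\sum_{j}\frac{1}{j}=\infty .
\]
The divergence comes from times $t\sim 4^{-j}\to0$, so no choice of $t_0$ helps. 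Hence the bound you aim for, $\|\theta\|_{L^1_t\bes^1_{\infty,1}}\le C(1+t)(1+\|\omega\|_{L^1_tL^\infty})\log(e+\|\omega\|_{L^\infty_tL^\infty})$ with $C$ depending on $\|\theta_0\|_{L^2}$, is already false for $u\equiv0$. For the heat flow, $\|\theta\|_{L^1_T\bes^1_{\infty,1}}$ in fact controls the high-frequency part of $\|\theta_0\|_{\bes^{-1}_{\infty,1}}$.

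What is missing is a hypothesis, not a technique. Danchin--Paicu \cite{DP} assume $\theta_0\in L^2\cap\bes^{-1}_{\infty,1}$, as the footnote attached to this theorem records. The paper gives no proof of its own and only cites \cite[Theorem~1]{DP}, so the version with $\theta_0\in L^2$ alone is not covered by that citation, and your argument does not bridge the difference.

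With $\theta_0\in\bes^{-1}_{\infty,1}$, the free heat part contributes $\lesssim\|\theta_0\|_{\bes^{-1}_{\infty,1}}$ to $\|\theta\|_{L^1_t\bes^1_{\infty,1}}$. The substantive work is then the part you leave as an expectation: estimating the transport term for a merely log-Lipschitz velocity, with a loss in $\|\omega\|_{L^\infty}$ that is linear up to an Osgood-admissible factor. That is what Danchin--Paicu's Littlewood--Paley analysis provides.

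The remaining pieces of your outline are sound within the stated solution class. These are the energy bounds, the Lipschitz-in-time $L^2$ continuity, the compactness argument, and the Yudovich/Osgood uniqueness argument using $\na\theta_2\in L^1_tL^\infty_x$ and the boundedness of $\delta u$ on $\tor$.
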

We refer to $(u,\theta)$ in Theorem~\ref{thm: DP} as the \emph{Yudovich solution} to the Euler--Boussinesq equation~\eqref{eq: bk0}, named after V.~I. Yudovich who pioneered the study \cite{yud} of uniqueness of weak solutions to the incompressible Euler equations with bounded vorticity in 1963.

An adaptation of the arguments in \cite{DP} leads to the existence and uniqueness of the Yudovich solution $(u^\nu, \theta^\nu)$ to the diffusive Boussinesq Equation~\eqref{eq: bkn}:
\begin{align*}
    u^\nu \in C^{0,1}_t L^2_x, \quad \omega^\nu \in L^\infty_t L^\infty_x,\quad\text{and }  \theta^\nu \in C^0_t L^2_x \cap L^2_t H^1_x \cap L^1_t \left(\bes^1_{\infty,1}\right)_x
\end{align*}
for any initial data $(u^\nu_0,\theta^\nu_0) \in L^2 \times L^2$ and $\omega^\nu_0 \in L^\infty$.  
In fact, standard energy estimates yield that when $\nu, \kappa>0$, there exists a unique global solution for arbitrarily large data  \cite{14, guo}.

The main theorem of this paper is as follows. It suggests that no turbulence onsets for the 2D periodic Boussinesq flow in the inviscid limit $\nu \searrow 0$ (with fixed $\kappa>0$). 
\begin{theorem}\label{thm: main}
    Let $(u,\theta)$ be the unique Yudovich weak solution to the heat conductive Euler--Boussinesq Equation~\eqref{eq: bk0} with initial data $(u_0, \theta_0) \in L^2\left(\tor;\R^2\right) \times L^2\brator$ and $\omega_0 \in L^\infty\brator$. Let $(u^\nu, \theta^\nu)$ be the unique Yudovich weak solution to the heat conductive, diffusive Boussinesq Equation~\eqref{eq: bkn} with initial data $(u^\nu_0, \theta^\nu_0) \in L^2\left(\tor;\R^2\right) \times L^2\brator$ and $\omega^\nu_0 \in L^\infty\brator$.

    Suppose that $\omega_0^\nu \to \omega_0$, $u^\nu_0\to u_0$, and $\theta^\nu_0 \to \theta_0$, all strongly in $L^2\brator$.  Then, for any finite time $T>0$ and $p \in [1,\infty[$, the inviscid limit for vorticity holds strongly in $L^\infty\left(0,T; L^p\brator\right)$:
    \begin{align*}
        \lim_{\nu \searrow 0} \sup_{t \in [0,T]} \left\| \omega^\nu(t) - \omega(t)\right\|_{L^p\brator} = 0.
    \end{align*}
\end{theorem}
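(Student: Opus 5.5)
The plan is to adapt the Constantin--Drivas--Elgindi argument for 2D Euler with $L^\infty$ vorticity to vorticity equations with a forcing term. Taking ${\rm rot}$ of \eqref{eq: bkn} and \eqref{eq: bk0} gives
\begin{align*}
\p_t\omega^\nu + u^\nu\cdot\na\omega^\nu - \nu\Delta\omega^\nu = \p_1\theta^\nu,\qquad \p_t\omega + u\cdot\na\omega = \p_1\theta .
\end{align*}
The forcing $f^\nu:=\p_1\theta^\nu$ (and $f:=\p_1\theta$) lies in $L^1(0,T;\linf\brator)$. Indeed, Theorem~\ref{thm: DP} and its viscous analogue give $\theta^\nu,\theta\in L^1_t(\bes^1_{\infty,1})_x$, and $\bes^1_{\infty,1}\emb W^{1,\infty}$. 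Since $\tor$ is compact, it suffices to treat $p=2$: the $L^\infty_tL^\infty_x$ bounds on $\omega^\nu,\omega$ then give every $p<\infty$ by interpolation. These bounds must be uniform in $\nu$, and they follow from the maximum principle, $\|\omega^\nu(t)\|_\linf\le \|\omega^\nu_0\|_\linf+\int_0^t\|\p_1\theta^\nu\|_\linf$. A similar uniformity issue arises for $\|\omega_0^\nu\|_\linf$, which the hypotheses do not control. I would either reduce to the case where it is uniformly bounded, or split $\omega^\nu_0$ into a bounded mollified part plus an $L^2$-small remainder and use linear $L^2$ stability for the remainder.

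\textbf{Step 1 (temperature).} Show $\theta^\nu\to\theta$ in $L^2_tH^1_x\cap C_tL^2_x$, and in fact $\p_1\theta^\nu\to\p_1\theta$ in $L^1_t\linf_x$ or at least in $L^1_tL^2_x$. The difference $\delta\theta$ satisfies $\p_t\delta\theta+u^\nu\cdot\na\delta\theta-\kappa\Delta\delta\theta=-(u^\nu-u)\cdot\na\theta$. An energy estimate gives $\|\delta\theta\|_{L^2}^2+\kappa\int\|\na\delta\theta\|^2\lesssim \|\delta\theta_0\|^2+\int\|u^\nu-u\|_{L^2}\|\na\theta\|_\linf\|\delta\theta\|_{L^2}$, which is usable because $\na\theta\in L^1_t\linf_x$.

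\textbf{Step 2 (velocity).} Couple this with the velocity energy estimate. For $\delta u=u^\nu-u$,
\begin{align*}
\tfrac12\tfrac{d}{dt}\|\delta u\|_{L^2}^2\le \|\na u\|_{L^q}\|\delta u\|_{L^{2q'}}^2+\|\delta\theta\|_{L^2}\|\delta u\|_{L^2}+\nu\|\na u^\nu\|_{L^2}\|\na u\|_{L^2}.
\end{align*}
Here $\|\na u\|_{L^q}\lesssim q$ by Calder\'on--Zygmund estimates with bounded vorticity. The Yudovich Osgood argument then yields $\sup_t\|\delta u\|_{L^2}+\sup_t\|\delta\theta\|_{L^2}\to0$.

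\textbf{Step 3 (vorticity, main obstacle).} Vorticity convergence cannot come from energy estimates, since $\|\na\omega\|$ is not available. I would follow CDE using a Lagrangian/mollification argument. Mollify the limit datum, $\omega_0^\ell=\omega_0*\varphi_\ell$, and let $\omega^\ell$ solve the inviscid forced equation with $u$ and forcing $f*\varphi_\ell$. The inviscid transport is linear, so $\|\omega^\ell-\omega\|_{L^2}\le\|\omega^\ell_0-\omega_0\|_{L^2}+\int\|f*\varphi_\ell-f\|_{L^2}$, which is small uniformly in $\nu$. Next, the flow of the log-Lipschitz field $u$ is H\"older with exponent $e^{-Ct}$, which controls $\|\na\omega^\ell(t)\|_{L^2}\lesssim \ell^{-\alpha(t)}$. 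In the same way compare $\omega^\nu$ with $\omega^{\nu,\ell}$, defined with the same viscous transport and mollified data; the viscous equation is also linear in $\omega$ given $u^\nu$, so the same stability holds. Finally, estimate $\omega^{\nu,\ell}-\omega^\ell$ by $L^2$ energy:
\begin{align*}
\tfrac{d}{dt}\|\omega^{\nu,\ell}-\omega^\ell\|_{L^2}\le \|\delta u\|_{L^2}\|\na\omega^\ell\|_\linf+\nu\|\Delta\omega^\ell\|_{L^2}+\|f^\nu-f\|_{L^2}.
\end{align*}
The $\|\na\omega^\ell\|_\linf$ factor, which grows like $\ell^{-c}$, is the delicate part. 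I would pair it with the $L^2$ velocity convergence rate from Step 2, choose $\ell=\ell(\nu)\to0$ slowly, and send $\nu\to0$, then $\ell\to0$.
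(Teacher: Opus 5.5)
\textbf{Steps 1--2.} These are sound, and they take a different route from the paper. The paper obtains $u^\nu\to u$ in $L^\infty_tL^2_x$ from a Constantin--Drivas--Elgindi-type propagation-of-smallness estimate (Proposition~\ref{propn-lem4}) iterated over short time steps. Your coupled Osgood argument for $\|\delta u\|_{L^2}^2+\|\delta\theta\|_{L^2}^2$ is the classical Yudovich route. It also gives $\theta^\nu\to\theta$ in $L^2_tH^1_x$, which is what makes the mollification errors $\int_0^T\|\p_1\theta^\nu-\varphi_\ell\star\p_1\theta^\nu\|_{L^2}\,\dd t$ small uniformly in $\nu$.

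Your worry about $\sup_\nu\|\omega_0^\nu\|_{L^\infty}$ is legitimate; the paper uses it tacitly through Lemma~\ref{lem: Omega and Theta}. For $p>2$, however, it cannot be removed by splitting. With only $L^2$ convergence of $\omega^\nu_0$ the conclusion already fails at $t=0$, so the uniform bound has to be assumed.

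\textbf{The gap is in Step 3.} Hölder continuity of the flow of $u$ gives at most a $C^{\alpha(t)}$ bound on $\omega^\ell(t)$. It gives no control of $\|\nabla\omega^\ell(t)\|_{L^2}$. Moreover, $\|\nabla\omega^\ell(t)\|_{L^\infty}$ is in general infinite for every $t>0$, because the flow of a Yudovich field need not be Lipschitz. An example is $\theta\equiv0$ with the stationary Bahouri--Chemin cellular flow, whose flow map is not Lipschitz at the hyperbolic stagnation point. Your differential inequality therefore has infinite coefficients $\|\nabla\omega^\ell\|_{L^\infty}$ and $\nu\|\Delta\omega^\ell\|_{L^2}$, and the ``choose $\ell(\nu)$'' step has nothing to work with.

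Three ingredients are missing.
\begin{enumerate}
\item[(i)] Arrange the estimate so that only $\|\nabla\omega^\ell\|_{L^2}$ enters. Bound $\int\delta u\cdot\nabla\omega^\ell\,w\le\|\delta u\|_{L^2}\|\nabla\omega^\ell\|_{L^2}\|w\|_{L^\infty}$, where $w=\omega^{\nu,\ell}-\omega^\ell$ is bounded uniformly in $\nu$. Integrate the viscous term by parts, so that it costs only $\frac{\nu}{2}\|\nabla\omega^\ell\|_{L^2}^2$ after absorbing $\nu\|\nabla w\|_{L^2}^2$.
\item[(ii)] Obtain an actual bound on $\|\nabla\omega^\ell\|_{L^2}$; this is the paper's key ingredient. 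The field $\sigma=|\nabla\omega^\ell|$ satisfies $(\p_t+u\cdot\nabla)\sigma\le|\nabla u|\sigma+|\nabla(\varphi_\ell\star\p_1\theta)|$, with forcing in $L^1_tL^\infty_x$. Exponential integrability of $\nabla u$ (Lemma~\ref{lem: MT}), together with a decreasing exponent $p(t)$, gives $\|\sigma(t)\|_{L^2}\lesssim1+\|\sigma_0\|_{L^{2p_0}}^{p_0}$. This holds only for $t\le\gamma(p_0-1)/(2p_0\Omega_\infty)$ (Proposition~\ref{propn-lem2}).
\item[(iii)] Restart the argument in time, since no such bound is available beyond an interval of length $T_\star\sim\gamma/\Omega_\infty$. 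Prove $\omega^\nu\to\omega$ in $L^\infty_tL^2_x$ on $[0,T_\star]$, then re-mollify at $T_\star,2T_\star,\dots$. This is legitimate because $(u^\nu,\theta^\nu,\omega^\nu)(kT_\star)\to(u,\theta,\omega)(kT_\star)$ in $L^2$ reproduces the hypotheses. Also $\Omega_\infty$, and hence $T_\star$, is uniform in $\nu$ and $k$, so the intervals do not shrink.
\end{enumerate}
With (i)--(iii) in place your arrangement works, and it is slightly more economical than the paper's: only the inviscid $\omega^\ell$ needs the gradient bound.
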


\begin{remark}
By Poincar\'{e}'s inequality, the strong convergence of $u_0^\nu \to u_0$ in $L^2\brator$ can be deduced from $\omega_0^\nu \to \omega_0$ in $L^2\brator$, \emph{provided that} $\int_\tor u^\nu_0\,\dd x \to \int_\tor u_0\,\dd x$.

The issues pointed out in \cite[Remark~1]{cde} carry over to the case of Boussinesq equations on $\tor$. More precisely, (i) Theorem~\ref{thm: main} does not hold for $T=+\infty$; (ii) there cannot be any effective estimate on the convergence rate without additional regularity assumptions on $\omega_0$; (iii), the convergence $\omega^\nu\to\omega$ does not hold in $L^\infty_t L^\infty_x$; and (iv), the arguments in this paper only work for $\tor$, while on general bounded domains one expects the development of boundary layers.

\end{remark}

The proof of Theorem~\ref{thm: main} relies on the analysis of the vorticity equations:
\begin{align}
    & \p_t \omega^\nu + u^\nu \cdot \na \omega^\nu - \nu \Delta \omega^\nu = \p_1 \theta^\nu,\label{vort eq, nu} \\
    & \p_t \omega + u \cdot \na \omega = \p_1\theta.\label{vort eq, 0} 
\end{align}
obtained by taking ${rot}$ to the Boussinesq and Euler--Boussinesq equations~\eqref{eq: bkn} and \eqref{eq: bk0}. They can be viewed as the vorticity equations for the Navier--Stokes and Euler equations with forcing terms $\p_1\theta^\nu$ and $\p_1\theta$, respectively. In the seminal work \cite{cde} by Constantin--Drivas--Elgindi (2022), the inviscid limit of the Yudovich solution to the Navier--Stokes equations on $\tor$ \emph{with $L^\infty_tL^\infty_x$-forcing} has been established. In this paper, we show that the approach in \cite{cde} is, in fact, amenable to be extended to $L^1_tL^\infty_x$-forcing, which is natural for the Yudovich solution to the Euler--Boussinesq equation~\eqref{eq: bk0} as in the framework \emph{\`{a} la} Danchin--Paicu \cite{DP}. Such extensions and improvements are achieved by essentially ODE-type arguments (see \S\ref{sec: estimates} below). This is the key ingredient of our proof for the Main Theorem~\ref{thm: main}.

Let us further remark on the significance and the ``naturality'' of the regularity condition $\theta^\nu, \theta \in L^1_t W^{1,\infty}_x$. Indeed, for strong solution $(u, \theta) \in C^0_t (H^s_x \times H^s_x)$ for $s \geq 3$ to the Euler--Boussinesq equation~\eqref{eq: bk0}, the condition $$\int_0^{t_\star} \|\na\theta(t)\|_{L^\infty_x}\,\dd t = \infty$$ is a \emph{breakdown criterion}. That is, it implies that the strong solution must blow up (in $H^3_x$-norm) before time $t_\star$; see \cite{18, 19, 53}. This is the analogue for the well-known Beale--Kato--Majda breakdown criterion for the Euler equations.

\smallskip
\noindent
{\bf Notation.} Throughout this paper, we fix a finite time $T>0$. The spatial domain is the two-dimensional periodic domain $\tor = [0,1]^2 = \R^2/\mathbf{Z}^2$, with opposite sides of the square identified with each other. For $\Omega \subset \tor$, $\area(\Omega)$ denotes the 2-dimensional Lebesgue measure of $\Omega$.

We write the Bochner spaces $L^p_tL^q_x \equiv L^p\left(0,T; L^q\brator\right)$, and similarly for $C^0_tL^q_x$, $L^p_t \left(\bes^1_{\infty,1}\right)_x\ldots$ We denote by $\bes^s_{p,q}$ the Besov spaces; in particular, $\bes^1_{\infty,1}\brator \emb W^{1,\infty}\brator$ continuously.

\smallskip
\noindent
{\bf Organisation.}
The remaining parts of the paper are organised as follows:

In \S\ref{sec: prelim} we collect several technical inequalities that are used repeatedly in the paper. 

In \S\ref{sec: estimates} we prove two key estimates, which are extensions of the results in \cite{cde} to the case of $L^1_tL^\infty_x$-forcing terms. 

In \S\ref{sec: conclusion} we present the proof of our Main Theorem~\ref{thm: main}.

We close the paper by several concluding remarks in \S\ref{sec: comments}.

\section{Preliminaries}\label{sec: prelim}

In this section, we collect a few lemmata needed for subsequent developments.

First, we have a simple arithmetic lemma, which serves as a substitute for the usual Young's inequality $ab \leq \frac{a^p}{p}+\frac{b^{p'}}{p'}$ in the case $a<0$.

\begin{lemma}\label{lem: arithmetic}
For any $a \in \R$ and $b>0$, we have 
\begin{align*}
    ab \leq e^a + b \log b - b.
\end{align*}
\end{lemma}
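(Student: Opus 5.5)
This is the Fenchel--Young inequality for the convex function $x\mapsto e^x$, whose Legendre conjugate is $b\mapsto b\log b - b$ on $b>0$. I would give the elementary one-variable proof rather than invoke convex duality.

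Fix $b>0$ and set $f(a) := e^a - ab + b\log b - b$ for $a\in\R$. The claim is that $f\ge 0$ on $\R$. Since $f'(a) = e^a - b$ and $f''(a)=e^a>0$, the function $f$ is strictly convex. Its unique critical point is $a_\ast = \log b$, which is well defined because $b>0$, so $f$ attains its global minimum there. Evaluating,
\begin{align*}
f(\log b) = b - b\log b + b\log b - b = 0,
\end{align*}
hence $f(a)\ge 0$ for every $a\in\R$. This is exactly $ab \le e^a + b\log b - b$.

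Equivalently, one can fix $a$ and minimise $g(b) := e^a + b\log b - b - ab$ over $b>0$. Here $g'(b) = \log b - a$, so the minimiser is $b = e^a$, where $g(e^a) = e^a + ae^a - e^a - ae^a = 0$.

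There is no real obstacle. The only point to check is that $b>0$ is needed for $\log b$ to make sense, and that no sign condition on $a$ is required. That is precisely why the lemma serves as a substitute for Young's inequality when $a<0$.
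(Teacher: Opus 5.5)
Your proof is correct and complete. For fixed $b>0$, the function $a\mapsto e^a-ab+b\log b-b$ is strictly convex, its unique critical point is $a=\log b$, and it vanishes there, so it is nonnegative on all of $\R$. The paper states this lemma without proof; it is the Fenchel--Young inequality for $x\mapsto e^x$, whose conjugate is $b\log b-b$. Your computation therefore just supplies the omitted verification.

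One small point is worth noting. In the paper's applications, $b=\beta^{-1}|\sigma|^{2p(t)}$ and $b=|v|^2U^{-2}$, and these may vanish at some points. There the inequality still holds trivially, as $0\le e^a$, with the convention $0\log 0=0$.
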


Next, recall the Gr\"{o}nwall inequality of the integral form.
\begin{lemma}\label{lem: gronwall}
    For non-negative functions $y(t)$, $A(t)$, and $B(t)$ on $[0,T]$ with $A,B \in  L^1_t([0,T])$, if $y'(t) \leq A(t)y(t) + B(t)$ for all $t \in [0,T[$, then for any $0 \leq t_0 \leq t$ we have that
    \begin{align*}
        y(t) \leq y(t_0) \cdot e^{\int_{t_0}^t A(s)\,\dd s} + \int_{t_0}^t B(s) \left\{e^{\int_{s}^t A(\tau)\,\dd \tau}\right\}\,\dd s.
    \end{align*}
\end{lemma}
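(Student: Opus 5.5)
The statement immediately above is the integral form of Gr\"{o}nwall's inequality (Lemma~\ref{lem: gronwall}). I will prove it by the standard integrating-factor argument, done carefully enough to allow $A,B \in L^1_t$ and not merely continuous. I interpret the hypothesis $y'(t)\le A(t)y(t)+B(t)$ as saying that $y$ is absolutely continuous on $[0,T]$ and the inequality holds for a.e.\ $t$; in the applications $y$ will be an $L^p$-type norm of a solution, which has this regularity.

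Fix $t_0\in[0,T]$ and define the integrating factor
\begin{align*}
\Phi(s) := \exp\left(-\int_{t_0}^s A(\tau)\,\dd\tau\right), \qquad s\in[t_0,T].
\end{align*}
Because $A\in L^1_t$, the map $s\mapsto \int_{t_0}^s A$ is absolutely continuous. Composing it with the locally Lipschitz function $\exp$ shows that $\Phi$ is absolutely continuous and positive, with $\Phi'(s)=-A(s)\Phi(s)$ for a.e.\ $s$. The product $y\Phi$ of two absolutely continuous functions is again absolutely continuous. By the product rule and the hypothesis, for a.e.\ $s$,
\begin{align*}
\frac{\dd}{\dd s}\big(y\Phi\big)(s) = \Phi(s)\big(y'(s)-A(s)y(s)\big) \le \Phi(s)B(s).
\end{align*}

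Integrate this from $t_0$ to $t$ using the fundamental theorem of calculus for absolutely continuous functions. This gives
\begin{align*}
y(t)\Phi(t) \le y(t_0) + \int_{t_0}^t \Phi(s)B(s)\,\dd s.
\end{align*}
The right-hand side is finite because $0<\Phi\le 1$ and $B\in L^1$; the bound $\Phi\le 1$ uses $A\ge 0$. Now multiply through by $\Phi(t)^{-1}=e^{\int_{t_0}^tA}$ and use $\Phi(s)/\Phi(t)=e^{\int_s^tA(\tau)\,\dd\tau}$. This is exactly the claimed inequality.

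The only delicate point is the regularity bookkeeping: with merely integrable $A$, the exponential factor and the product rule must be justified within the class of absolutely continuous functions. The steps above handle this. Nonnegativity of $y$ and $B$ is not actually needed for the argument; it is merely the setting in which the lemma will be applied.
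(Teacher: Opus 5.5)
Your argument is correct. The paper gives no proof of this lemma; it simply recalls it as the standard integral-form Gr\"{o}nwall inequality, and your integrating-factor argument carried out in the class of absolutely continuous functions is exactly that standard proof.

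Your reading of the hypothesis ($y$ absolutely continuous, inequality holding a.e.) matches how the lemma is applied to energy-type quantities. It is also genuinely needed: if $y'$ is only required on $[0,T[$, then $y\equiv 0$ on $[0,T[$ with $y(T)=1$ and $A=B=0$ satisfies the hypothesis but violates the conclusion at $t=T$.
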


We shall also use the following Trudinger--Moser inequality; see \cite[Lemma~1]{cde} (one may choose $C_K=2\area\brator=2$). It quantifies the $\linf\to {\rm BMO}$ boundedness of the Calder\'{o}n--Zygmund operator $\na \na^\perp(-\Delta)^{-1}$, \emph{i.e.}, gradient of the convolution by the Biot--Savart kernel. 

\begin{lemma}\label{lem: MT}
Let $\omega = {\rm rot}\, u \in \linf\brator$. Then there exist universal constants $\gamma$, $C_K>0$ such that for any $\beta>0$ with $\beta \|\omega\|_{L^\infty_x} \leq \gamma$, it holds that
\begin{align*}
    \int_{\tor} e^{\beta|\na u|}\,\dd x \leq C_K.
\end{align*}
\end{lemma}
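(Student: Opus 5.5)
The plan is to prove Lemma~\ref{lem: MT} by expanding the exponential as a power series and controlling each term by an $L^k$ estimate for $\na u$ whose constant grows only \emph{linearly} in $k$. This is the quantitative form of the $\linf\to{\rm BMO}$ boundedness mentioned before the lemma. First, write $\na u = \na\na^\perp(-\Delta)^{-1}\omega$, up to the constant mean of $u$, which does not affect $\na u$. On $\tor$ this is a matrix of Fourier multipliers of the form $\xi_i\xi_j/|\xi|^2$ acting on the mean-free function $\omega$ (indeed $\int_\tor \omega\,\dd x = \int_\tor {\rm rot}\,u\,\dd x=0$). So each entry of $\na u$ is a composition $R_iR_j\omega$ of two periodic Riesz transforms, a Calder\'{o}n--Zygmund operator.

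The key analytic input is the bound
\begin{align*}
\|\na u\|_{L^k\brator} \leq C_0\, k\, \|\omega\|_{L^k\brator} \leq C_0\, k\, \|\omega\|_{\linf\brator}\qquad\text{for all } k\geq 2,
\end{align*}
with $C_0$ universal; the second inequality uses $\area\brator=1$. I would obtain the first inequality in one of two ways. One is to transfer the sharp Euclidean bounds for $R_iR_j$ (norms $O(p)$ as $p\to\infty$, e.g.\ by Pichorides-type estimates or Marcinkiewicz interpolation between the $L^2$ bound and the $\linf\to{\rm BMO}$ bound) to $\tor$ via de Leeuw's transference theorem for Fourier multipliers. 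The other is to run the interpolation directly on $\tor$, using the periodic ${\rm BMO}$ bound, which gives the constant $Ck$ uniformly. For $k=1$ I simply use $\|\na u\|_{L^1}\leq\|\na u\|_{L^2}\leq C_0\|\omega\|_{\linf}$.

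Then, by monotone convergence and Stirling's bound $k^k/k!\leq e^k$,
\begin{align*}
\int_\tor e^{\beta|\na u|}\,\dd x = \sum_{k\geq 0}\frac{\beta^k\|\na u\|_{L^k}^k}{k!} \leq 1+\sum_{k\geq1}\frac{(C_0 k\,\beta\|\omega\|_\linf)^k}{k!} \leq 1+\sum_{k\geq 1}(e C_0\gamma)^k.
\end{align*}
Choosing $\gamma := (2eC_0)^{-1}$ makes the right-hand side at most $2 = 2\area\brator$, so one may take $C_K=2$.

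The main obstacle is the linear-in-$k$ growth of the $L^k$ operator norm of $\na\na^\perp(-\Delta)^{-1}$ on the torus. Any faster growth, e.g.\ $k^{1+\epsilon}$, would make the series diverge for every $\beta>0$. I would handle this first by transference from $\R^2$, where the $O(p)$ bound is classical. Should that step be awkward, the fallback is to use the John--Nirenberg inequality directly on the ${\rm BMO}(\tor)$ function $\na u$. Since $\na u$ has zero mean, this gives exponential integrability at a level proportional to $1/\|\na u\|_{\rm BMO}\gtrsim 1/\|\omega\|_\linf$, which is exactly the smallness condition $\beta\|\omega\|_{\linf}\leq\gamma$.
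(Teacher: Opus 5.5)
Your proof is correct and follows essentially the same route as the argument the paper cites from \cite[Lemma~1]{cde}: Taylor-expand the exponential, use the $O(k)$ growth of the $L^k$ norm of the Calder\'{o}n--Zygmund operator $\na\na^\perp(-\Delta)^{-1}$, apply $k^k/k!\leq e^k$, and sum a geometric series, which is exactly why the paper remarks that one may take $C_K=2\area\brator=2$. The one point to tidy is that de Leeuw's theorem needs the multiplier to be continuous (or regulated) at lattice points, and $\xi_i\xi_j/|\xi|^2$ is not continuous at $\xi=0$; this is harmless because $\omega$ has zero mean, so you can redefine the symbol at the origin (or use your direct periodic interpolation instead).
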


Here and hereafter, we denote
\begin{equation}\label{Theta, Omega}
\begin{cases}
    U:= \sup_{\nu>0} \|u^\nu\|_{L^\infty_t L^2_x} +  \|u\|_{L^\infty_t L^2_x},\\
    \Omega_p := \sup_{\nu >0 }\|\omega^\nu\|_{L^\infty_t L^p_x}  + \|\omega\|_{L^\infty_t L^p_x}\qquad \text{for any } p \in [1,\infty],\\
    \Theta := \sup_{\nu >0 }\left\{\|\theta^\nu\|_{L^1_t W^{1,\infty}_x} + \|\theta^\nu\|_{L^2_t H^1_x} \right\} + \left\{\|\theta\|_{L^1_t W^{1,\infty}_x} + \|\theta\|_{L^2_t H^1_x}\right\}.
\end{cases}
\end{equation}
\begin{lemma}\label{lem: Omega and Theta}
The quantities $\Theta$ and $\Omega_p$ for any $p \in [1,\infty]$ are finite. Indeed, they are bounded by constants depending only on the initial data $\|u_0\|_{L^2_x}$, $\sup_{\nu >0 }\|u^\nu_0\|_{L^2_x}$, 
    $\|\omega_0\|_{L^\infty_x}$, $\sup_{\nu >0 }\|\omega^\nu_0\|_{L^\infty_x}$, $\|\theta_0\|_{\left(\bes^1_{\infty,1}\right)_x \cap H^1_x}$, and  $\sup_{\nu >0 }\|\theta^\nu_0\|_{\left(\bes^1_{\infty,1}\right)_x \cap H^1_x}$.
\end{lemma}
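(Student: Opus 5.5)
We bound each of the three quantities separately, using only \emph{a priori} estimates that are uniform in $\nu>0$, with $\kappa>0$ fixed. We take the energy, then the temperature, then the vorticity.

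\textbf{Step 1: temperature in $L^\infty_tL^2_x\cap L^2_tH^1_x$.} Test the $\theta$-equation in \eqref{eq: bkn} (or \eqref{eq: bk0}) against $\theta^\nu$. Since $\na\cdot u^\nu=0$, the transport term drops out, giving
\[
\|\theta^\nu(t)\|_{L^2_x}^2+2\kappa\int_0^t\|\na\theta^\nu\|_{L^2_x}^2\,\dd s=\|\theta^\nu_0\|_{L^2_x}^2 .
\]
This controls $\|\theta^\nu\|_{L^2_tH^1_x}$ by $(1+T)^{1/2}(1+\kappa^{-1/2})\|\theta^\nu_0\|_{L^2_x}$. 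The maximum principle also gives $\|\theta^\nu\|_{L^\infty_tL^q_x}\le\|\theta^\nu_0\|_{L^q_x}$ for every $q\in[1,\infty]$, and $\theta^\nu_0\in\bes^1_{\infty,1}\emb L^\infty$.

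\textbf{Step 2: velocity.} Test the $u$-equation against $u^\nu$. The pressure and transport terms vanish and the viscous term has a good sign, so $\frac{\dd}{\dd t}\|u^\nu\|_{L^2_x}\le\|\theta^\nu\|_{L^2_x}$. Hence
\[
\|u^\nu(t)\|_{L^2_x}\le\|u^\nu_0\|_{L^2_x}+T\|\theta^\nu_0\|_{L^2_x},
\]
which bounds $U$.

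\textbf{Step 3: vorticity and the coupling.} Apply the $L^p$ estimate to \eqref{vort eq, nu}. The transport term vanishes and the viscous term has a good sign, which gives
\[
\|\omega^\nu(t)\|_{L^p_x}\le\|\omega^\nu_0\|_{L^p_x}+\int_0^t\|\p_1\theta^\nu\|_{L^p_x}\,\dd s ,
\]
uniformly in $p$ and hence also for $p=\infty$. So $\Omega_p\lesssim\Omega_\infty$ is finite as soon as $\|\na\theta^\nu\|_{L^1_tL^\infty_x}$ is bounded uniformly in $\nu$. This is the \textbf{main obstacle}: the $L^1_tW^{1,\infty}_x$ (indeed $L^1_t\bes^1_{\infty,1}$) bound on $\theta^\nu$ uniform in $\nu$.

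I would follow the Danchin--Paicu scheme \cite{DP}, observing that $\nu$ enters only through a term with a favourable sign. Localise the $\theta$-equation with Littlewood--Paley blocks $\Delta_j$. For each block, the heat semigroup gives decay like $e^{-c\kappa 2^{2j}t}$. The commutator $[u^\nu\cdot\na,\Delta_j]\theta^\nu$ is controlled by $\|\na u^\nu\|$ in a logarithmic Lipschitz-type norm, which brings in $\|\omega^\nu\|_{L^2\cap L^\infty}$ through the Biot--Savart law and Lemma~\ref{lem: MT}. Because of the heat smoothing, the data regularity $\theta^\nu_0\in\bes^1_{\infty,1}$ and the maximal regularity estimate give
\[
\|\theta^\nu\|_{L^1_t\bes^1_{\infty,1}}\le C\big(\|\theta^\nu_0\|_{\bes^1_{\infty,1}}\big)\Big(1+\int_0^t\|\na u^\nu\|_{\bes^0_{\infty,1}}\|\theta^\nu\|_{\bes^0_{\infty,\infty}}\,\dd s\Big),
\]
up to a loss that is only logarithmic. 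The velocity term is in turn bounded by $\|\omega^\nu\|_{L^2\cap L^\infty}\log(e+\|\theta^\nu\|_{L^1_t\bes^1_{\infty,1}})$.

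Combining this with Step 3, we would set
\[
Y(t)=\|\omega^\nu\|_{L^\infty_tL^\infty_x}+\|\theta^\nu\|_{L^1_t\bes^1_{\infty,1}} .
\]
This yields an Osgood-type (log-linear) inequality for $Y$. By Lemma~\ref{lem: gronwall} applied to $\log(e+Y)$, $Y$ is bounded by a double exponential in $T$ that depends only on the listed data norms and $\kappa$. The viscous term $-\nu\Delta\omega^\nu$ only improves every estimate, so the bound is uniform in $\nu$. Finally, $\bes^1_{\infty,1}\emb W^{1,\infty}$ gives the finiteness of $\Theta$.
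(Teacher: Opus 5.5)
Your Steps~1--3 are correct and coincide with the paper's proof. There, $U$ comes from the energy inequality, and your linear bound $\|u^\nu(t)\|_{L^2_x}\le\|u^\nu_0\|_{L^2_x}+t\|\theta^\nu_0\|_{L^2_x}$ is sharper than the paper's $e^t$ bound. $\Omega_\infty$ comes from the transport structure of \eqref{vort eq, nu}--\eqref{vort eq, 0} once $\Theta<\infty$, and $\Omega_p\le\Omega_\infty$ from $\area\brator=1$. Your explicit $L^2_tH^1_x$ bound for $\theta^\nu$ is a detail the paper leaves inside its citation.

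For the one nontrivial ingredient, the uniform bound on $\|\theta^\nu\|_{L^1_tW^{1,\infty}_x}$, the paper gives no argument and simply invokes Danchin--Paicu \cite{DP}. The sketch you give in its place does not work, because it hinges on
\[
\|\na u^\nu\|_{\bes^0_{\infty,1}}\lesssim\|\omega^\nu\|_{L^2\cap\linf}\log\bigl(e+\|\theta^\nu\|_{L^1_t\bes^1_{\infty,1}}\bigr),
\]
which is false. Since $\bes^0_{\infty,1}\emb\linf$, taking $\theta\equiv0$ it would assert $\|\na u\|_{\linf}\lesssim\|\omega\|_{L^2\cap\linf}$, i.e.\ that $\na\na^\perp(-\Delta)^{-1}$ is bounded on $\linf$. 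That operator only maps $\linf$ into BMO, which is exactly why Lemma~\ref{lem: MT} gives exponential integrability and not boundedness. For instance, the stationary Bahouri--Chemin Euler solution $\omega=\mathrm{sgn}(x_1)\,\mathrm{sgn}(x_2)$ on $[-\tfrac12,\tfrac12]^2$ has $\na u$ logarithmically unbounded at the origin. More generally, a commutator bound for $[u^\nu\cdot\na,\Delta_j]\theta^\nu$ carrying a factor $\|\na u^\nu\|_{\bes^0_{\infty,1}}$ is a Lipschitz-transport estimate, which is precisely what is unavailable for Yudovich velocities. So the Osgood inequality for $Y$ does not follow.

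What does work is to use only $\|u^\nu\|_{\linf_x}$ and $\|u^\nu\|_{\bes^1_{\infty,\infty}}$, both $\lesssim\|u^\nu\|_{L^2_x}+\|\omega^\nu\|_{\linf_x}$, and let the diffusion absorb the derivative. Write $u^\nu\cdot\na\theta^\nu=\na\cdot(u^\nu\theta^\nu)$ and treat it as a source for the heat equation. Dyadic parabolic smoothing gives
\[
\kappa\|\theta^\nu\|_{L^1_t\bes^1_{\infty,1}}\lesssim\|\theta^\nu_0\|_{\bes^{-1}_{\infty,1}}+\kappa T\|\theta^\nu_0\|_{\linf_x}+\sum_{j}\|\Delta_j(u^\nu\theta^\nu)\|_{L^1_t\linf_x}.
\]
In Bony's decomposition, $T_{\theta^\nu}u^\nu$ and $R(u^\nu,\theta^\nu)$ contribute at most $C\|\theta^\nu_0\|_{\linf_x}\int_0^t\|u^\nu\|_{\bes^1_{\infty,\infty}}\,\dd s$. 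For $T_{u^\nu}\theta^\nu$, split at frequency $2^N$ with $N\simeq\log\bigl(e+\kappa^{-1}\|u^\nu\|_{L^\infty_tL^\infty_x}\bigr)$. The high frequencies are absorbed into the left-hand side, and the low ones cost $CN\|\theta^\nu_0\|_{\linf_x}\int_0^t\|u^\nu\|_{\linf_x}\,\dd s$. Combined with your Step~3, this yields an Osgood-type inequality for $\sup_{s\le t}\|\omega^\nu(s)\|_{\linf_x}$ in which $\nu$ enters only through $\|u^\nu\|_{L^2_x}$ and $\|\omega^\nu\|_{\linf_x}$. That gives a bound on $\Theta$ uniform in $\nu$.
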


\begin{proof}
    The finiteness of $U$ follows from simple energy considerations:
    \begin{align*}
        &\|u(t)\|^2_{L^2_x} \leq \|u_0\|^2_{L^2_x} e^t + \|\theta_0\|^2_{L^2_x}(e^t-1),\\
        &\|u^\nu(t)\|^2_{L^2_x} \leq \|u^\nu_0\|^2_{L^2_x} e^t + \|\theta^\nu_0\|^2_{L^2_x}(e^t-1).
    \end{align*}
    The finiteness of $\Theta$ is established in Danchin--Paicu \cite{DP}. The finiteness of $\Omega_\infty$ follows from 
\begin{align*}
&\|\omega^\nu(t)\|_{L^\infty_x} \leq \|\omega_0^\nu\|_{L^\infty_x} + \int_0^t \left\|\p_1\theta^\nu(s)\right\|_{L_x^\infty}\,\dd s,\\ &\|\omega(t)\|_{L^\infty_x} \leq \|\omega_0\|_{L^\infty_x} + \int_0^t \left\|\p_1\theta(s)\right\|_{L_x^\infty}\,\dd s,
\end{align*}
thanks to the transport structure of the vorticity equations~\eqref{vort eq, nu} and \eqref{vort eq, 0}, as well as $\Theta<\infty$. Finally, $\Omega_p \leq \Omega_\infty$ for any $p \in [1,\infty[$ by H\"{o}lder, since $\area\brator=1$.  \end{proof} 

With Lemma~\ref{lem: Omega and Theta} at hand, in the sequel, we fix the constant $\beta$ in Lemma~\ref{lem: MT}:
\begin{align}
    \beta = \frac{\gamma}{\Omega_\infty}.
\end{align}
We shall also use standard inequalities (Sobolev, Ladyszhenskalya, H\"{o}lder, and Cauchy--Schwarz) on $\tor$ in the remaining parts of the paper.

\section{Two key estimates}\label{sec: estimates}

This section is devoted to the proof of two crucial estimates, Propositions~\ref{propn-lem2} and \ref{propn-lem4} below. They are modelled over Lemmata~2 $\&$ 4 in Constantin--Drivas--Elgindi \cite{cde}, and the variants and slight improvements presented in our work are necessary for treating the Yudovich solutions to the Boussinesq equations.

\subsection{Quantified loss of $L^p$-regularity for scalars transported by Yudovich velocity}

The following result is a variant of \cite[Lemma~2]{cde}, which relaxes the requirement of regularity on forcing $f$ from $L^\infty_t L^\infty_x$ to $L^1_t L^\infty_x$. This is crucial to our development, as $\na \theta \in L^1_t L^\infty_x$ for the Yudovich solution to the Boussinesq equations. A similar idea for the proof of \cite[Lemma~2]{cde} was used in Elgindi--Jeong \cite{ej}.

\begin{proposition}\label{propn-lem2}
Let $u=u(t,x)$ be divergence-free, $\omega := {\rm rot} \,u \in L^\infty_t L^\infty_x$, and $\|\omega\|_{L^\infty_tL^\infty_x} \leq \Omega_\infty < \infty$. Suppose that a non-negative scalar field $\sigma = \sigma(t,x)$ satisfies the differential inequality
\begin{align*}
    \p_t\sigma + u\cdot\na \sigma - \nu \Delta \sigma \leq |\na u|\sigma + f,
\end{align*}
where $\sigma_0 := \sigma\big|_{t=0} \in L^\infty_x$ and $f \in L^1_t L^\infty_x$. Then
\begin{align*}
    \|\sigma\|_{L^\infty_t L^2_x\left(\left[0, \frac{\gamma(p-1)}{2p \Omega_\infty}\right] \times \tor\right)} \leq C\left(1+ \|\sigma_0\|^p_{L^{2p}_x} \right) \qquad\text{for any $p \in ]1,\infty[$},
\end{align*}
where $C$ depends only on $p$, $\gamma/\Omega_\infty$, and $\|f\|_{L^1_t L^\infty_x}$. (Recall the constants $\gamma$
and $\Omega_\infty$ from Lemmata~\ref{lem: MT} and \ref{lem: Omega and Theta}, respectively.)  The same result holds also for $\nu=0$.
\end{proposition}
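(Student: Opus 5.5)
The plan is to follow the scheme of \cite[Lemma~2]{cde}: track the growth of a time-dependent $L^q$-norm of $\sigma$ via an ODE argument, and use Lemma~\ref{lem: arithmetic} together with Lemma~\ref{lem: MT} to absorb the stretching term $|\na u|\sigma$. The $L^1_tL^\infty_x$ forcing is handled with Lemma~\ref{lem: gronwall} instead of a pointwise-in-time bound.

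\emph{Step 1: differential identity.} Fix $p\in]1,\infty[$ and let $q(t)$ be a decreasing exponent with $q(0)=2p$, chosen so that $q(t)\ge 2$ on $[0,T_p]$, where $T_p:=\frac{\gamma(p-1)}{2p\Omega_\infty}$. Set $y(t):=\int_\tor \sigma^{q(t)}\,\dd x$. Multiply the inequality by $q\sigma^{q-1}$ and integrate over $\tor$. The transport term vanishes since $\na\cdot u=0$, and the viscous term gives $-\nu q(q-1)\int\sigma^{q-2}|\na\sigma|^2\le 0$; for $\nu=0$ this term is simply absent. This yields
\begin{align*}
y'(t)\le q'(t)\int_\tor \sigma^{q}\log\sigma\,\dd x + q\int_\tor |\na u|\,\sigma^{q}\,\dd x + q\|f(t)\|_{L^\infty_x}\int_\tor\sigma^{q-1}\,\dd x .
\end{align*}
A regularisation such as $\sigma+\e$ justifies these manipulations rigorously.

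\emph{Step 2: absorb the stretching term.} Apply Lemma~\ref{lem: arithmetic} pointwise with $a=\beta|\na u|$ and $b=\lambda\sigma^q$, where $\lambda>0$ is a normalisation. Lemma~\ref{lem: MT} then gives
\begin{align*}
q\int|\na u|\sigma^q \le \frac{q}{\beta}\Bigl(C_K + \int \sigma^q\log\sigma^q + (\log\lambda-1) y\Bigr).
\end{align*}
The entropy term equals $\frac{q^2}{\beta}\int\sigma^q\log\sigma$. Choose $q$ to solve $q'=-q^2/\beta$, that is,
\begin{align*}
\frac{1}{q(t)}=\frac{1}{2p}+\frac{t}{\beta}.
\end{align*}
Then the two $\sigma^q\log\sigma$ terms cancel exactly. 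With $\beta=\gamma/\Omega_\infty$, the condition $q\ge 2$ is equivalent to $t\le \beta\frac{p-1}{2p}=T_p$, which is exactly the time interval in the statement.

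\emph{Step 3: Gr\"onwall.} Bound the forcing term by $\int\sigma^{q-1}\le 1+y$ (since $\area\brator=1$) and take $\lambda=1$. This gives
\begin{align*}
y'\le A(t)\,y + B(t), \qquad A,B \lesssim_{p,\beta} 1+\|f(t)\|_{L^\infty_x}\in L^1_t.
\end{align*}
Lemma~\ref{lem: gronwall} then yields $y(t)\le C\bigl(y(0)+1\bigr)=C\bigl(1+\|\sigma_0\|_{L^{2p}_x}^{2p}\bigr)$, with $C$ depending only on $p$, $\beta$, $T_p$ and $\|f\|_{L^1_tL^\infty_x}$. Since $q(t)\ge2$ and $\area=1$, we have $\|\sigma(t)\|_{L^2_x}\le \|\sigma(t)\|_{L^{q(t)}_x}=y^{1/q}$, and $q\le 2p$ gives $y^{1/q}\le C\bigl(1+\|\sigma_0\|_{L^{2p}_x}^{2p/q}\bigr)$. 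At the endpoint $q=2$ this is the claimed power $p$; for $q>2$ it is bounded by $C\bigl(1+\|\sigma_0\|^{p}_{L^{2p}_x}\bigr)$ after using $s^{2p/q}\le 1+s^p$.

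\emph{Main obstacle.} The delicate point is the exact cancellation of the entropy terms and the bookkeeping of the $\log$ factors where $\sigma<1$, where $\sigma^q\log\sigma$ is negative but bounded below by $-1/(eq)$. Because $q'\le0$, this lower bound contributes only an $O(|q'|)$ bounded additive term, which is harmless. One must also check that the forcing enters only through $\|f(t)\|_{L^\infty}$ linearly, so that $L^1$-in-time integrability suffices for Lemma~\ref{lem: gronwall}. This replaces the pointwise $L^\infty_t$ bound used in \cite{cde}.
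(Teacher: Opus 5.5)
Your proposal is correct and is essentially the paper's argument. Your exponent $q(t)$ is exactly the paper's $2p(t)$, chosen via $q'=-q^2/\beta$ so that the entropy term from Lemma~\ref{lem: arithmetic} and Lemma~\ref{lem: MT} cancels, after which the $L^1_tL^\infty_x$ forcing enters linearly and is handled by Gr\"onwall on $[0,\frac{\gamma(p-1)}{2p\Omega_\infty}]$. The differences are cosmetic: you normalise with $\lambda=1$ instead of $\beta^{-1}$ (with general $\lambda$ the $C_K$ term should read $C_K/\lambda$, which is harmless here), and you bound $\int\sigma^{q-1}\le 1+y$ where the paper uses H\"older plus Young.
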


This result states that the $L^2_x$-norm of a scalar transported by a Yudovich velocity field with $L^1_tL^\infty_x$-forcing can be controlled by the $L^{2+\e}_x$-norm of its initial datum uniformly in some time interval, which degenerates as $\e \to 0^+$. 

\begin{proof}
We essentially follow the arguments in \cite{cde}. Let us only emphasise the  modifications to the proof of Lemma~2 in \cite[pp.67--69]{cde}.

Fix $p_0 \in ]1,\infty[$. Consider a $C^1$-function $p=p(t)$  to be specified later that takes values in $[1,p_0]$. Energy estimate applied to $\|\theta(t)\|^{2p(t)}_{L^{2p(t)}_x}$ leads to the following:
\begin{align*}
   \frac{1}{2}\frac{\dd}{\dd t} \left(\|\sigma(t)\|^{2p(t)}_{L^{2p(t)}_x}\right) &\leq p'(t) \int_\tor \left(\log|\sigma|\right) |\sigma|^{2p(t)}\,\dd x\\
   &\quad + p(t) \int_\tor |\sigma|^{2p(t)}|\na u|\,\dd x + p(t) \|f(t)\|_{L^\infty_x} \|\sigma\|_{L^{2p(t)}_x}^{2p(t)}.
\end{align*}
See \cite[Estimate~(2.20)]{cde} for details. Here we bound
\begin{align*}
    \int_\tor |\sigma(t,x)|^{2p(t)-2}\sigma(t,x) f(t,x)\,\dd x \leq \|f(t)\|_{L^\infty_x}\|\sigma\|_{L^{2p(t)}_x}^{2p(t)-1},
\end{align*}
thanks to ${\rm Area}\brator =1$. In addition, the diffusion term is treated by
\begin{align*}
&\nu \int_\tor |\sigma(t,x)|^{2p(t)-2}\sigma(t,x)\Delta\sigma(t,x)\,\dd x\\
&\qquad= - \nu (2p(t)-1) \int_\tor |\sigma(t,x)|^{2p-2}|\na\sigma(t,x)|^2\,\dd x \leq 0,
\end{align*}
\emph{i.e.}, we only need the non-positivity of this term. Hence, our proof remains valid $\nu=0$.

An application of Lemma~\ref{lem: arithmetic} with $a = \beta |\na u|$, $b = \beta^{-1}|\sigma|^{2p(t)}$ therein yields the following slight variant of \cite[Estimate~(2.26)]{cde}:
\begin{align*}
\frac{1}{2}\frac{\dd}{\dd t} \left(\|\sigma(t)\|^{2p(t)}_{L^{2p(t)}_x}\right) &\leq \left(p'(t)  + \frac{2p^2(t)}{\beta} \right)\int_\tor \left(\log|\sigma|\right) |\sigma|^{2p(t)}\,\dd x\\
&\qquad+ \frac{p(t)}{\beta} \log(\beta^{-1}) \|\sigma(t)\|_{L^{2p(t)}_x}^{2p(t)}\\
&\qquad + p(t)C_K + Cp(t) \|f(t)\|_{L^\infty_x} \|\sigma(t)\|_{L^{2p(t)}_x}^{2p(t)-1}.
\end{align*}

As in \cite{cde}, choose $p=p(t)$ so that $p'(t)  + \frac{2p^2(t)}{\beta}=0$ and $p(0)=p_0$. That is, $$p(t) = \frac{\beta p_0}{\beta+2p_0 t},$$ which is a monotone decreasing function in $t$. Note that $p(t_\star)=1$ where $$t_\star = \frac{\beta(p_0-1)}{2p_0}.$$ Recall $\beta := \gamma/\Omega_\infty$ as before. The quantity
\begin{align}\label{def, m}
    m(t) := \|\sigma(t)\|^{2p(t)}_{L^{2p(t)}_x}
\end{align}
then satisfies the differential inequality:
\begin{align}\label{diff ineq}
    m'(t) \leq C p(t) \left[ \|f(t)\|_{L^\infty_x} \cdot m(t)^{\frac{2p(t)-1}{2p(t)}}  + m(t) + 1\right]
\end{align}
for some $C=C(C_K, \beta)$. Recall that $C_K$ is a universal constant; \emph{e.g.}, $C_K = 2 {\rm Area}\brator=2$.

To proceed, by Young's inequality, one has $$m(t)^{\frac{2p(t)-1}{2p(t)}} \leq m(t) \cdot \frac{2p(t)-1}{2p(t)} + \frac{1}{2p(t)}.$$ Since $p(t)\geq 1$, we deduce from \eqref{diff ineq} that
\begin{align*}
m'(t) \leq Cp(t) h(t)\big[m(t)+1\big],
\end{align*}
where $$h(t):= 1+ \|f(t)\|_{L^\infty_x}.$$

The above differential inequality can be easily solved by
\begin{align*}
    \log\big(m(t)+1\big) \leq \log\big(m(0)+1\big) + C \int_0^t p(s)h(s)\,\dd s,
\end{align*}
whence
\begin{align*}
    \int_0^t p(s)h(s)\,\dd s &= \int_0^t \frac{\beta p_0}{\beta + 2p_0 s} \left(1+ \|f(s)\|_{L^\infty_x}\right)\,\dd s\\
    &\leq p_0 \left(t_\star + \|f\|_{L^1_t L^\infty_x}\right).
\end{align*}
By setting $C':=e^{Cp_0 \left(t_\star + \|f\|_{L^1_t L^\infty_x}\right)}$ we thus obtain the bound:
\begin{align*}
  \sup_{t \in [0,t_\star]}  m(t) \leq C'\big(m(0)+1\big).
\end{align*}
Notice that $C'$ depends only on $\beta$, $C_K$, $p_0$, and $\|f\|_{L^1_t L^\infty_x}$. In the remaining parts of the proof, the constants $C'$ may change from line to line, but they retain the same dependence on parameters.

To conclude, we recall the definition of $m(t)$ in \eqref{def, m} to find that
\begin{align*}
\|\sigma(t)\|^{2p(t)}_{L^{2p(t)}_x} \leq C'\left\{\|\sigma(0)\|^{2p_0}_{L^{2p_0}_x}+1\right\} \qquad \text{for any } t \in [0,t_\star].
\end{align*}
Taking $\frac{1}{2p(t)}$-th power on both sides and noticing $\|\sigma(t)\|_{L^2_x} \leq \|\sigma(t)\|_{L^{2p(t)}_x}$ as $p(t) \geq 1$, we have
\begin{align*}
\|\sigma(t)\|_{L^{2}_x} \leq C'\left\{\|\sigma(0)\|_{L^{2p_0}_x}^{p_0/p(t)}+1\right\}.
\end{align*}
But here $p_0/p(t)$ is monotone increasing from $1$ to $p_0$ as $t$ varies from $0$ to $t_\star$; hence,
\begin{align*}
\|\sigma(t)\|_{L^{2}_x} \leq C'\left\{\|\sigma(0)\|^{p_0}_{L^{2p_0}_x}+1\right\}.
\end{align*}
We complete the proof by relabelling $p_0$ as $p$.    
\end{proof}

\subsection{Propagation of smallness for $u^\nu-u$}

The following variant of \cite[Lemma~4]{cde} states that,  for small inverse Reynolds number and small $\|(u^\nu-u)(t_0)\|_{L^2_x}$ at time $t_0$, for $t-t_0 \ll 1$ (quantified in terms of $\Omega_\infty$ and $\gamma$), the norm $\|(u^\nu-u)(t)\|_{L^2_x}$ remains small, with an effective bound on the propagation of smallness in terms of the physical parameters appearing in Lemma~\ref{lem: Omega and Theta}.

\begin{proposition}\label{propn-lem4}
Let $u^\nu$ and $u$ be the Yudovich solutions to the Boussinesq equations~\eqref{eq: bkn} and \eqref{eq: bk0}, respectively; that is, consider the initial data with uniformly bounded vorticities $\omega_0, \omega^\nu_0 \in L^\infty_x$.  
There exist constants $U$, $\Omega_2$, $\Theta$, and $\mathcal{K}$ depending only on the initial data such that the velocity difference $v:= u^\nu-u$ satisfies
\begin{align*}
    \frac{\|v(t)\|^2_{L^2}}{U^2} \leq 4 {\mathcal{K}}^{\frac{4 (t-t_0)\Omega_\infty}{\gamma}} \left\{\frac{\|v(t_0)\|^2_{L^2}}{U^2} + \frac{\gamma}{\Omega_\infty} \frac{\Omega_2^2}{U^2} \nu\right\}^{1-{\frac{20 (t-t_0)\Omega_\infty}{\gamma}}}
\end{align*}
for any $t \in [t_0,T]$, provided that $\nu$ and $\|v(t_0)\|$ can be chosen sufficiently small.
\end{proposition}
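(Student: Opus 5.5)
We run a Yudovich-type energy estimate for $v:=u^\nu-u$, as in the proof of \cite[Lemma~4]{cde}; the new feature is the coupling term $(\theta^\nu-\theta)\etwo$. Subtracting \eqref{eq: bk0} from \eqref{eq: bkn} and testing against $v$ gives
\begin{align*}
\frac12\frac{\dd}{\dd t}\|v\|_{L^2}^2 + \nu\|\na v\|_{L^2}^2 \leq \int_\tor |\na u|\,|v|^2\,\dd x + \nu \left|\int_\tor \na u : \na v\,\dd x\right| + \int_\tor |\theta^\nu-\theta|\,|v|\,\dd x .
\end{align*}
The pressure and the transport term $u^\nu\cdot\na v$ drop out by incompressibility. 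The viscous cross term is handled by Cauchy--Schwarz and Young, which gives a bound $\frac{\nu}{2}\|\na v\|^2_{L^2}+\frac{\nu}{2}\Omega_2^2$ since $\|\na u\|_{L^2}=\|\omega\|_{L^2}$. This is the origin of the term $\Omega_2^2\nu$ in the statement.

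The stretching term is treated exactly as in \cite{cde}. Lemma~\ref{lem: arithmetic} is applied with $a=\beta\e|\na u|$ and $b=|v|^2/(\e\beta)$ after normalising $|v|^2$ by $U^2$, and Lemma~\ref{lem: MT} then bounds $\int e^{\beta|\na u|}$ by $C_K$. Equivalently, one splits $\int|\na u||v|^2$ with H\"older using $\|\na u\|_{L^q}\leq Cq\Omega_\infty$ and $\|v\|_{L^\infty}$-free interpolation against $\|v\|_{L^2}^{2-2/q}U^{2/q}$, choosing $q\sim\log(U^2/\|v\|^2)$. Either route yields, for $y(t):=\|v(t)\|^2_{L^2}/U^2$, an inequality of the form
\begin{align*}
y'(t) \leq \frac{C\Omega_\infty}{\gamma}\, y\log\frac{\mathcal{K}}{y} + \frac{\Omega_2^2}{U^2}\nu + (\text{buoyancy term}).
\end{align*}

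The buoyancy term is the main obstacle, since $\theta^\nu-\theta$ is not a priori small. I would first try to absorb it through the temperature equation. Set $\eta=\theta^\nu-\theta$. Then
\begin{align*}
\p_t\eta + u^\nu\cdot\na\eta - \kappa\Delta\eta = -v\cdot\na\theta,
\end{align*}
so that $\frac{\dd}{\dd t}\|\eta\|^2_{L^2}\leq 2\|\na\theta\|_{L^\infty}\|v\|_{L^2}\|\eta\|_{L^2}$. Working with the combined quantity $y:=(\|v\|^2_{L^2}+\|\eta\|^2_{L^2})/U^2$, where $U$ is enlarged if needed to dominate the $\theta$ norms, the forcing enters only through $(1+\|\na\theta\|_{L^\infty})\,y$. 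Its time integral is bounded by $\Theta$, so it contributes a multiplicative factor that can be folded into $\mathcal{K}$, in the same ODE-type spirit as Proposition~\ref{propn-lem2}. The statement only records $v$, but the bound on $v$ follows since $\|v\|^2\leq U^2 y$. The smallness of $\eta(t_0)$ that this argument requires is either supplied by the hypotheses or obtained from heat smoothing, and this is precisely the delicate point I expect.

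Finally I integrate the Osgood-type inequality. With $z=\log(\mathcal{K}/y)$, the inequality becomes $z'\geq -\frac{C\Omega_\infty}{\gamma}z$ up to the $\nu$ term. Treating the $\nu$ term by replacing $y(t_0)$ with $y(t_0)+\frac{\gamma}{\Omega_\infty}\frac{\Omega_2^2}{U^2}\nu$, which is valid while $y$ stays below $e^{-1}\mathcal{K}$, gives $y(t)\leq \mathcal{K}^{1-e^{-c(t-t_0)}}\bigl(y(t_0)+\dots\bigr)^{e^{-c(t-t_0)}}$. Using $e^{-s}\geq 1-s$ and the short-time regime then produces the exponents $\frac{4(t-t_0)\Omega_\infty}{\gamma}$ and $1-\frac{20(t-t_0)\Omega_\infty}{\gamma}$ with harmless constants, and the prefactor $4$ absorbs the normalisations. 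The proviso that $\nu$ and $\|v(t_0)\|$ are small is what keeps $y$ in the region where $y\log(\mathcal{K}/y)$ is increasing, so that the comparison argument is legitimate.
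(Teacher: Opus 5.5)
\paragraph{The gap.} The gap is the one you flag yourself, and it cannot be closed. Write $\zeta=\theta^\nu-\theta$; this is your $\eta$. Your coupled quantity $(\|v\|^2_{L^2}+\|\zeta\|^2_{L^2})/U^2$, after the Osgood integration, gives at best
\[
\frac{\|v(t)\|^2_{L^2}}{U^2}\le C\,\mathcal{K}^{c(t-t_0)}\left\{\frac{\|v(t_0)\|^2_{L^2}+\|\zeta(t_0)\|^2_{L^2}}{U^2}+\frac{\gamma}{\Omega_\infty}\frac{\Omega_2^2}{U^2}\,\nu\right\}^{1-c'(t-t_0)},
\]
with $c,c'\sim\Omega_\infty/\gamma$. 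The prefactor $C$ is of order $e^{T+\Theta}$ and comes from the $L^1_t$ coefficient $1+\|\na\theta\|_{L^\infty}$. It is not absorbed by the $4$ or by a power of $\mathcal K$, though that is harmless.

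The real problem is the term $\|\zeta(t_0)\|^2_{L^2}$. The proposition assumes nothing about $\theta^\nu-\theta$. Heat smoothing improves the regularity of $\zeta$, not its size; for instance $\int_\tor\zeta\,\dd x$ is conserved, since $\int_\tor v\cdot\na\theta\,\dd x=0$.

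In fact the proposition as stated is false. Take $u^\nu_0=u_0=0$, $\theta_0=0$, $\theta^\nu_0=\cos(2\pi x_1)$. Then $(u,\theta)\equiv(0,0)$. Meanwhile $\theta^\nu=e^{-4\pi^2\kappa t}\cos(2\pi x_1)$ and $u^\nu=(0,a_\nu(t)\cos(2\pi x_1))$ with constant pressure, where $a_\nu'=-4\pi^2\nu a_\nu+e^{-4\pi^2\kappa t}$ and $a_\nu(0)=0$; all nonlinear terms vanish identically. Here $v(0)=0$, and $U,\Omega_p,\Theta,\mathcal K$ are bounded uniformly in $\nu$. Yet
\[
\|v(t)\|^2_{L^2}=\tfrac12 a_\nu(t)^2\to\tfrac12\bigl(\tfrac{1-e^{-4\pi^2\kappa t}}{4\pi^2\kappa}\bigr)^2>0
\]
as $\nu\searrow0$, whereas the claimed right-hand side tends to $0$ for $0<t<\gamma/(20\Omega_\infty)$.

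\paragraph{The paper's proof has the same problem.} It does not use the temperature equation. It splits $\int|\zeta||v|$ over $\bigt$ and $\smallt$, applies Young with $\Lambda=\log\G$, and then fixes $M,\e$ via \eqref{choice of M}--\eqref{choice of epsilon}. The aim is to make the contributions $\frac{\beta\sqrt U}{\sqrt M}\Theta$ and $\frac{\beta^2\Theta^2}{2U^2\log\G}$ at most the paper's $\eta(t_0)=y(t_0)+\beta\nu\Omega_2^2/U^2$, which is unrelated to your $\eta$.

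The second requirement forces $\log\G\ge\beta^2\Theta^2/(U^2\eta(t_0))$. Moreover, \eqref{choice of epsilon} gives $\e\le\eta(t_0)/(2C_K)$, not $\ge$. So the asserted bound $\G\le\mathcal K\eta(t_0)^{-5}$ fails. The resulting estimate $4\G^{4(t-t_0)/\beta}\eta(t_0)\ge4\eta(t_0)\exp\{4(t-t_0)\beta\Theta^2/(U^2\eta(t_0))\}$ is useless as $\eta(t_0)\to0$.

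\paragraph{What your approach does prove.} Your coupled energy is the right repair. It proves the corrected estimate displayed above. That is all Theorem~\ref{thm: main} needs, provided Step~4 of \S\ref{sec: conclusion} is iterated on the combined quantity, whose initial value is small because $u^\nu_0\to u_0$ and $\theta^\nu_0\to\theta_0$.

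Iterating on the combined quantity also avoids the separate bound $\|\zeta(t)\|^2_{L^2}\le e^{2U\Theta}\|\zeta(0)\|^2_{L^2}$ in Step~3 there. That bound does not follow from $\frac{\dd}{\dd t}\|\zeta\|^2_{L^2}\le2\|\na\theta^\nu\|_{L^\infty}\|v\|_{L^2}\|\zeta\|_{L^2}$, which only yields $\|\zeta(t)\|_{L^2}\le\|\zeta(0)\|_{L^2}+\int_0^t\|\na\theta^\nu\|_{L^\infty}\|v\|_{L^2}\,\dd s$.
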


Indeed, the following choice of  $\mathcal{K}$ will be clear from the proof below:
\begin{align*}
\mathcal{K} = 2C_0\frac{\gamma}{\Omega_\infty} \left(
 \frac{\Omega_2\Omega_4}{U} + \sqrt{U}\Theta   \right)^4,
\end{align*}
where $C_0$ is a universal constant, $\gamma$ is the universal constant appearing in the Moser--Trudinger Lemma~\ref{lem: MT}, and $U$, $\Theta$, $\Omega_2$, $\Omega_4$, and $\Omega_\infty$ are as in \eqref{Theta, Omega} above.

Throughout the proof, $C$ denotes universal constants that may change from line to line. 

\begin{proof}
    We divide the arguments  into six steps below.

\smallskip
\noindent
{\bf Step~1.} By subtracting the velocity equations \eqref{eq: bkn} and \eqref{eq: bk0}, we deduce the PDE:
\begin{equation}\label{diff eq}
    \p_tv + u^\nu \cdot \na v + v \cdot\na u + \na p = \nu \Delta v + \nu \Delta u + \zeta \etwo,
\end{equation}
where 
\begin{align*}
    v:= u^\nu-u,\qquad \zeta := \theta^\nu-\theta,\qquad p := P^\nu-P.
\end{align*}
Standard energy estimates yield that
\begin{align}\label{energy est, lem4}
&\frac{\dd}{\dd t}\|v(t)\|^2_{L^2_x} + \nu \|\na v(t)\|^2_{L^2_x} \nonumber\\
&\qquad\leq \nu \|\na u(t)\|^2_{L^2_x} +2 \int_\tor |\na u(t,x)||v(t,x)|^2\,\dd x + 2\int_\tor|\zeta(t,x)||v(t,x)|\,\dd x.
\end{align}

In what follows, we recall $U$, $\Omega_p$, and $\Theta$ from Lemma~\ref{lem: Omega and Theta}.  We proceed by splitting the last two integrals on the right-hand side of \eqref{energy est, lem4} as $\int_\tor = \int_\bigt+\int_\smallt$, where for each $t \in [0,T]$ one defines the big and small parts, respectively:
\begin{align*}
    \bigt := \left\{x \in \tor:\, |v(t,x)| \geq MU\right\},\qquad \smallt:=\tor \setminus \bigt.
\end{align*}
The constant $M>0$ is to be specified. Note by the Markov inequality that
\begin{equation}\label{area estimate}
    \area\left(\bigt\right) \leq \frac{1}{M^2U^2} \int_\bigt |v(t,x)|^2\,\dd x.
\end{equation}

\smallskip
\noindent
{\bf Step~2.} In this step, we estimate the integrals over $\bigt$. Indeed, we have
\begin{align}\label{uv, BIG}
    \int_\bigt |\na u(t,x)||v(t,x)|^2\,\dd x &\leq 2 \|\na u(t)\|_{L^2_x(\bigt)} \|v(t)\|_{L^4_x(\bigt)}^2 \nonumber\\
    &\leq 2  \|\na u(t)\|_{L^4_x(\bigt)} \left\{\area\left(\bigt\right)\right\}^{\frac{1}{4}}\|v(t)\|_{L^4_x(\bigt)}^2\nonumber\\
    &\leq C\frac{\Omega_4 \Omega_2 U}{\sqrt{M}}.
\end{align}
The first two lines above follow from the H\"{o}lder inequality, while the final line holds by the div-curl estimate $\|\na u\|_{L^p_x} \lesssim_p \|\omega\|_{L^p_x}$ for $p \in [1,\infty[$ and ${\rm div}\,u=0$,  the Ladyszhenskaya inequality $\|v\|_{L^4_x} \lesssim \|v\|_{L^2_x}^{1/2}\|\na v\|_{L^2_x}^{1/2}$, the definition of $U$, and the bound~\eqref{area estimate}. 

Similarly, we obtain that
\begin{align}\label{zeta v, BIG}
    \int_\bigt |\zeta(t,x)||v(t,x)|\,\dd x &\leq \|\zeta(t)\|_{L^4_x(\bigt)} \|v\|_{L^{4/3}_x(\bigt)}\nonumber\\
    &\leq \|\zeta(t)\|_{L^\infty_x} \left\{ \area\left(\bigt\right) \right\}^{1/4}\|v(t)\|_{L^{4/3}_x} \nonumber\\
    &\leq \|\zeta(t)\|_{L^\infty_x}  \frac{\|v(t)\|_{L^2_x}^{3/2}}{\sqrt{MU}} \nonumber\\
    &\leq \sqrt{\frac{U}{M}} \|\zeta(t)\|_{L^\infty_x},
\end{align}
thanks to H\"{o}lder's inequality and $\|v(t)\|_{L^{4/3}_x} \leq \|v(t)\|_{L^2_x}\area\brator^{1/4}$ with $\area\brator=1$.

\smallskip
\noindent
{\bf Step~3.} In this step, we estimate the integrals over $\smallt$. 

Recall the Moser--Trudinger estimate (Lemma~\ref{lem: MT}): $\int_\tor e^{\beta|\na u|}\,\dd x \leq C_K$ where $\beta = \gamma/\Omega_\infty$ and $C_K$ is a universal constant. Applying the arithmetic Lemma~\ref{lem: arithmetic} to $a =a(t,x) = \beta|\na u| + \log \e$ and $b = b(t,x) = |v|^2U^{-2}$ with $\e>0$ to be determined, we deduce that
\begin{align}\label{uv, SMALL}
&    \beta U^{-2} \int_\smallt |\na u(t,x)| |v(t,x)|^2\,\dd x\nonumber \\
&= \int_\smallt \Big(a(t,x)-\log\e\Big)b(t,x)\,\dd x\nonumber \\
    &\leq \e \int_\tor e^{\beta |\na u(t,x)|}\,\dd x + \int_\smallt \frac{|v(t,x)|^2}{U^2}\log\left(\frac{|v(t,x)|^2}{U^2}\right)\,\dd x   - \log\e\int_\smallt \frac{|v(t,x)|^2}{U^2}\,\dd x \nonumber\\
    &\leq \e C_K + \log\left(\frac{M^2}{\e}\right) y(t),
\end{align}
where 
\begin{equation}\label{y, def}
    y(t):= \frac{\|v(t)\|^2_{L^2_x}}{U^2}.
\end{equation}

On the other hand, by the Cauchy--Schwarz inequality, we have
\begin{align}\label{zeta v, SMALL}
    \beta U^{-2} \int_\smallt |\zeta(t,x)||v(t,x)|\,\dd x &\leq \frac{\beta}{U} \|\zeta(t)\|_{L^2_x} \,\sqrt{y(t)} \nonumber\\
    &\leq \Lambda y(t) + \frac{\beta^2}{4\Lambda U^2}\|\zeta(t)\|^2_{L^2_x},
\end{align}
where $\Lambda>0$ is a large (dimensionless) constant to be specified.

\smallskip
\noindent
{\bf Step~4.} Denote as in \cite{cde} the non-dimensionalised constant 
\begin{align*}
    \G := \frac{M^2}{\e}.
\end{align*}
In later steps, we shall require $\G\geq e^{1/4}$.

Substituting the bounds~\eqref{uv, BIG}, \eqref{zeta v, BIG}, \eqref{uv, SMALL}, and \eqref{zeta v, SMALL} into the energy estimate~\eqref{energy est, lem4}, we arrive at the differential inequality:
\begin{align}\label{diff ineq, 1}
    &\beta y'(t) + \beta \nu \frac{\|\na v(t)\|^2_{L^2}}{U^2} \nonumber\\
    &\qquad \leq F + 2(\Lambda + \log\G)y(t) +  \frac{\beta\sqrt{U}}{\sqrt{M}}\|\zeta(t)\|_{L^\infty_x} + \frac{\beta^2}{2U^2\Lambda}\|\zeta(t)\|^2_{L^2_x},
\end{align}
with the constant term
\begin{equation}\label{F, def}
    F := \beta\nu \frac{\Omega_2^2}{U^2} + C\frac{\beta \Omega_2\Omega_4}{\sqrt{M} U} + 2\e C_K.
\end{equation}
We emphasise that $C$ and $C_K$ are both universal constants. One may always enlarge $C$ without violating any of the inequalities here and hereafter.

\smallskip
\noindent
{\bf Step~5.} 
We apply the Gr\"{o}nwall  Lemma~\ref{lem: gronwall} with
\begin{equation*}
    \begin{cases}
        A(t) \equiv A = \frac{2}{\beta}(\Lambda+\log\G),\\
        B(t) := \frac{F}{\beta}+ \frac{\beta\sqrt{U}}{\sqrt{M}}\|\zeta(t)\|_{L^\infty_x} + \frac{\beta^2}{2U^2\Lambda}\|\zeta(t)\|^2_{L^2_x}.
    \end{cases}
\end{equation*}
Here $B \in L^1_t$ since $\theta$ and $\theta^\nu \in L^1_t \left(\bes^1_{\infty,1}\right)_x \cap L_t^2 L_x^2$ for Yudovich solutions. In the sequel, we shall select the parameters $\Lambda$, $M$, and $\e$ in this order.

We first choose 
\begin{align}\label{choice of Lambda}
    \Lambda = \log\G,
\end{align}
which leads to
\begin{equation*}
    \begin{cases}
        A(t) \equiv A = \frac{4}{\beta}\log\G,\\
        B(t) := \frac{F}{\beta}+ \frac{\beta\sqrt{U}}{\sqrt{M}}\|\zeta(t)\|_{L^\infty_x} + \frac{\beta^2}{2U^2\log\G}\|\zeta(t)\|^2_{L^2_x}.
    \end{cases}
\end{equation*}
One thus infers from Lemma~\ref{lem: gronwall} and \eqref{diff ineq, 1} that
\begin{align}\label{y, one}
    y(t) &\leq y(t_0) \G^{\frac{4(t-t_0)}{\beta}} + \int_{t_0}^t  
    \G^{\frac{4(t-s)}{\beta}}\left\{\frac{F}{\beta}+ \frac{\beta\sqrt{U}}{\sqrt{M}}\|\zeta(s)\|_{L^\infty_x} + \frac{\beta^2}{2U^2\log\G}\|\zeta(s)\|^2_{L^2_x}\right\}\,\dd s\nonumber\\
    &\leq y(t_0) \G^{\frac{4(t-t_0)}{\beta}} + \frac{F}{\beta}\int_{t_0}^t 
    \G^{\frac{4(t-s)}{\beta}}\,\dd s \nonumber  \\
    &\qquad\qquad + \G^{\frac{4(t-t_0)}{\beta}}\int_{t_0}^t\left\{\frac{\beta\sqrt{U}}{\sqrt{M}}\|\zeta(s)\|_{L^\infty_x} + \frac{\beta^2}{2U^2\log\G}\|\zeta(s)\|^2_{L^2_x}\right\}\,\dd s\nonumber \\
    &\leq \G^{\frac{4(t-t_0)}{\beta}} \left\{y(t_0) + \frac{\beta\nu \frac{\Omega_2^2}{U^2} + C\frac{\beta \Omega_2\Omega_4}{\sqrt{M} U} + 2\e C_K}{4\log\G} + \frac{\beta\sqrt{U}}{\sqrt{M}}\Theta +  \frac{\beta^2}{2U^2\log\G}\Theta^2\right\}\nonumber \\
    &\leq \G^{\frac{4(t-t_0)}{\beta}} \left\{y(t_0) + \beta\nu \frac{\Omega_2^2}{U^2} + \left[C\frac{\beta \Omega_2\Omega_4}{U} + \beta\sqrt{U}\Theta\right]\frac{1}{\sqrt{M}} + 2\e C_K   +  \frac{\beta^2\Theta^2}{2U^2\log\G}\right\}.
\end{align}
In the last two lines we use the expression \eqref{F, def} for $F$ and $\log \G \geq 1/4$ (justified later).

Denote
\begin{equation*}
    \eta(t_0):=y(t_0) + \beta\nu \frac{\Omega_2^2}{U^2}.
\end{equation*}
This quantity can be made arbitrarily small if $\nu$ and $\|v(t_0)\|_{L^2_x}$ are small. We next choose $M \gg 1$ such that
\begin{equation}\label{choice of M}
\left[C\frac{\beta \Omega_2\Omega_4}{U} + \beta\sqrt{U}\Theta\right]\frac{1}{\sqrt{M}}= \eta(t_0),
\end{equation}
and finally choose $\e \ll 1$ such that
\begin{equation}\label{choice of epsilon}
    \max\left\{2\e C_K, \frac{\beta^2\Theta^2}{ U^2\log\left(\frac{M^2}{\e}\right)} \right\} =  \eta(t_0).
\end{equation}
For \eqref{choice of epsilon}, recall $\G = M^2/\e$ and that $M$ therein has already been fixed in \eqref{choice of M} above.

Substituting \eqref{choice of epsilon} and \eqref{choice of M} into the final line of \eqref{y, one}, we arrive at
\begin{align}\label{y, two}
    y(t) \leq 4\G^{\frac{4(t-t_0)}{\beta}}  \eta(t_0).
\end{align}
Meanwhile, using once again \eqref{choice of M} and \eqref{choice of epsilon}, we deduce that
\begin{align*}
    \G = \frac{M^2}{\e} \leq \frac{\left\{ \frac{C\beta \Omega_2\Omega_4}{U\eta(t_0)} \right\}^4}{ \frac{\eta(t_0)}{2C_K}} = \frac{\mathcal{K}}{\eta(t_0)^5},
\end{align*}
where $\mathcal{K} = 2C_K \left(
 \frac{C\beta\Omega_2\Omega_4}{U} + \beta\sqrt{U}\Theta   \right)^4$.

 Therefore, we obtain from \eqref{y, two} that
\begin{align*}
    y(t) \leq 4{\mathcal{K} }^{\frac{4(t-t_0)}{\beta}}\eta(t_0)^{1-\frac{20(t-t_0)}{\beta}},
\end{align*}
which is the desired estimate. Note that $\G$ can be made large (say, $\G > e^{1/4}$) when $\eta(t_0)$ is sufficiently small. This completes the proof.   \end{proof}

\section{Conclusion}\label{sec: conclusion}

We are now ready to conclude the proof of the Main Theorem~\ref{thm: main}. The arguments are an adaptation of Constantin--Drivas--Elgindi \cite[Proof of Proposition~2, Step~2 in the proof of Lemma~4, Lemma~3, and Proof of Proposition~1]{cde}. Modifications in several places are needed to take care of the forcing terms $\theta^\nu$ and $\theta \in L^1_tL^\infty_x$.

\begin{proof}
The arguments are divided into four steps.

\smallskip
\noindent
{\bf Step~1.} We first show that it suffices to establish the $L^\infty_tL^2_x$-inviscid limit of vorticities for a mollified problem. Indeed, let $\varphi(x)$ be the standard mollifier on $\tor$ and $\varphi_\ell:=\ell^{-2}\varphi(\ell^2x)$. Consider the regularised vorticity equations (derived from Eqs.~\eqref{vort eq, nu} and \eqref{vort eq, 0}):
\begin{equation}\label{mollified vort eq, nu}
    \begin{cases}
        \p_t\omega_\ell^\nu + u^\nu\cdot\na \omega^\nu_\ell - \nu \Delta \omega^\nu_\ell = \varphi_\ell\star \p_1 \theta^\nu \qquad \text{in } [0,T] \times \tor,\\
        \omega_\ell^\nu\big|_{t=0} = \varphi_\ell\star \omega^\nu_0 \qquad \text{ at } \{0\} \times \tor,
    \end{cases}
\end{equation}
as well as \begin{equation}\label{mollified vort eq, 0}
    \begin{cases}
        \p_t\omega_\ell + u\cdot\na \omega_\ell = \varphi_\ell\star \p_1 \theta \qquad \text{in } [0,T] \times \tor,\\
        \omega_\ell\big|_{t=0} = \varphi_\ell\star \omega_0 \qquad \text{ at } \{0\} \times \tor,
    \end{cases}
\end{equation}
where $(u,\theta)$ and $(u^\nu,\theta^\nu)$ are the Yudovich solutions to the Euler--Boussinesq Equation~\eqref{eq: bk0} and the Boussinesq Equation~\eqref{eq: bkn}, respectively. Since
\begin{align*}
&\|\omega_\ell(t) - \omega(t)\|_{L^2_x} \leq \|\omega_0 -\varphi_\ell\star \omega_0\|_{L^2_x} + \int_0^t \| \p_1\theta(s) - \varphi_\ell \star \p_1\theta(s)\|_{L^2_x}\,\dd s,\\
&\|\omega^\nu_\ell(t) - \omega^\nu(t)\|_{L^2_x} \leq \|\omega^\nu_0 -\varphi_\ell\star \omega^\nu_0\|_{L^2_x} + \int_0^t \| \p_1\theta^\nu(s) - \varphi_\ell \star \p_1\theta^\nu(s)\|_{L^2_x}\,\dd s,
\end{align*}
these terms tend to zero as $\ell \searrow 0$ by the estimates for Yudovich solutions in Theorem~\ref{thm: DP}. Hence, by triangle inequality, interpolation $L^2_x \cap L^\infty_x \emb L^p_x$ for any $p > 2$, and the uniform boundedness of $L^\infty_x$-norm of $\omega^\nu$ and $\omega$ (namely, $\Theta<\infty$ in Lemma~\ref{lem: Omega and Theta}), it suffices to prove that
\begin{equation}\label{suff}
    \lim_{\nu \searrow 0} \left\|\omega^\nu_\ell - \omega_\ell\right\|_{L^\infty_t L^2_x} = 0
\end{equation}
for each $\ell \in \mathbb{N}$.

\smallskip
\noindent
{\bf Step~2.} 
We now prove that, for each \emph{fixed} $\ell \in \mathbb{N}$, on $[0,T_\star]$ for some $T_\star \lesssim \Omega_\infty^{-1}$ modulo a universal constant (in particular, independent of $\ell$),
\begin{equation}\label{H1-vort}
\sup_{\nu >0} \|\omega^\nu_\ell\|_{L^\infty_t H^1_x} + \|\omega_\ell\|_{L^\infty_t H^1_x} \leq C_\ell.
\end{equation}
Here the constant $C_\ell$ depends only on $\ell$.

For this purpose, note as on \cite[p.69]{cde} that the non-negative scalar field $\sigma = \left|\na \omega^\nu_\ell\right|$ satisfies the pointwise differential inequality:
\begin{align*}
    \left(\p_t + u^\nu\cdot\na - \nu \Delta\right) \sigma \leq |\na u|\sigma + \left|\na \left(\varphi_\ell \star \p_1 \theta^\nu\right)\right|
    \end{align*}
Here, the forcing term 
\begin{align*}
\left\|\na \left(\varphi_\ell \star \p_1 \theta^\nu\right)\right\|_{L^1_tL^\infty_x} \lesssim \ell^{-1}\left\|\p_1 \theta^\nu\right\|_{L^1_tL^\infty_x} \lesssim \ell^{-1}\Theta
\end{align*}
modulo universal constants, with $\Theta$ as in Lemma~\ref{lem: Omega and Theta}.\footnote{In fact, we have the improved bound $\left\|\na \left(\varphi_\ell \star \p_1 \theta^\nu\right)\right\|_{L^1_tL^\infty_x} = \left\|\left(\na\p_1\varphi_\ell\right)\star\theta^\nu\right\|_{L^1_tL^\infty_x} \lesssim \ell^{-2}\left\| \theta^\nu\right\|_{L^1_tL^\infty_x}$ here.}  Hence, we infer from Proposition~\ref{propn-lem2} that 
\begin{equation}\label{epsilon}
\left\|\omega_\ell^\nu\right\|_{L^\infty_t H^1_x\left(\left[0, \frac{\gamma\e}{2(1+\e)\Omega_\infty} \right] \times \tor\right)} \leq C_\e  \left(1+ \left\|\omega_0^\nu \star \na\varphi_\ell\right\|^{1+\e}_{L^{2(1+\e)}_x}\right)\quad\text{for arbitrary } \e>0,
\end{equation}
where $C_\e$ depends only on $\e$, $\beta=\gamma/\Omega_\infty$, $\ell$, and $\Theta$. It is clear that the term $\left\|\omega_0^\nu \star \na\varphi_\ell\right\|^{1+\e}_{L^{2(1+\e)}_x}$ is bounded by a constant depending only on $\Omega_\infty$, $\e$, and $\ell$. We fix the choice of $\e>0$ once and for all; to fix the idea, let us take $\e=1$.

All the above arguments remain valid for $\nu=0$, and all the constants are uniform in $\nu$. Thus, \eqref{H1-vort} follows from \eqref{epsilon}.

\smallskip
\noindent
{\bf Step~3.} From the mollified vorticity equations~\eqref{mollified vort eq, nu}, \eqref{mollified vort eq, 0}, Cauchy--Schwarz, and the bound~\eqref{H1-vort} in Step~2 above, we deduce that
\begin{align}\label{vort diff, energy}
   \frac{1}{2}\frac{\dd}{\dd t} \|\omega^\nu_\ell(t)-\omega_\ell(t)\|^2_{L^2_x} &= - \int_\tor (u^\nu-u) \cdot \na \omega^\nu_\ell \left(\omega^\nu_\ell-\omega_\ell\right)\,\dd x\nonumber \\
   &\qquad - \nu \int_\tor \left|\na \omega^\nu_\ell\right|^2\,\dd x + \nu \int_\tor \na\omega^\nu_\ell \cdot \na\omega_\ell\,\dd x \nonumber\\
   &\qquad + \int_\tor \left(\omega^\nu_\ell-\omega_\ell\right)\left[\varphi_\ell\star\p_1(\theta^\nu-\theta)\right]\,\dd x\nonumber\\
   &\leq C_\ell\Big\{\|u^\nu(t)-u(t)\|_{L^2_x} + \|\theta^\nu(t)-\theta(t)\|_{L^2_x}\Big\}\|\omega^\nu_\ell(t)-\omega_\ell(t)\|_{L^2_x}  + C_\ell\nu,
\end{align}
where $C_\ell$ depends on $\ell$, $\Omega_\infty$, and $\Theta$. Here we make use of the simple observation $$\varphi_\ell\star\p_1(\theta^\nu-\theta)=(\p_1\varphi_\ell)\star(\theta^\nu-\theta).$$

By the Boussinesq equations~\eqref{eq: bkn}, \eqref{eq: bk0}, $\theta^\nu-\theta$ satisfies the PDE:
\begin{align*}
    \p_t (\theta^\nu-\theta) + (u^\nu-u)\cdot\na \theta^\nu + u \cdot \na (\theta^\nu-\theta) - \kappa \Delta(\theta^\nu-\theta) = 0.
\end{align*}
By a standard energy estimate, we have
\begin{align*}
\frac{\dd}{\dd t} \|(\theta^\nu-\theta)(t)\|^2_{L^2_x} \leq 2\|\na \theta^\nu(t)\|_{L^\infty_x} \|(u^\nu-u)(t)\|_{L^2_x} \|(\theta^\nu-\theta)(t)\|_{L^2_x}.
\end{align*}
Thus, by the Gr\"{o}nwall Lemma~\ref{lem: gronwall} and the bounds in Lemma~\ref{lem: Omega and Theta},
\begin{align*}
    \|(\theta^\nu-\theta)(t)\|^2_{L^2_x} &\leq \|\theta^\nu_0-\theta_0\|^2_{L^2_x} \cdot \exp\left\{{2\int_0^t}\|\na \theta^\nu(s)\|_{L^\infty_x} \|(u^\nu-u)(s)\|_{L^2_x}\,\dd s\right\}\nonumber\\
    &\leq e^{2U\Theta}\|\theta^\nu_0-\theta_0\|^2_{L^2_x}.
\end{align*}
Here we use that $\|u^\nu\|_{L^2_x} + \|u\|_{L^2_x} \leq U$ and $\|\theta^\nu\|_{L^1_tW^{1,\infty}_x} \leq \Theta$. This bound for $\theta^\nu-\theta$ holds for any $t \in [0,T]$, not just on $\left[0,\frac{\gamma}{4\Omega_\infty}\right]$.

Therefore, in view of \eqref{vort diff, energy} and the Gr\"{o}nwall Lemma~\ref{lem: gronwall}, it remains to prove the strong convergence of velocity:
\begin{equation}\label{velocity convergence}
    \lim_{\nu \searrow 0} \left\|u^\nu-u\right\|_{L^\infty_t L^2_x\left(\left[0,\frac{\gamma}{4\Omega_\infty}\right]\times\tor\right)} = 0. 
\end{equation}
Indeed, assuming \eqref{velocity convergence}, we may conclude that $\lim_{\nu \searrow 0} \|\omega^\nu_\ell-\omega_\ell\|_{L^\infty_t L^2_x\left(\left[0,\frac{\gamma}{4\Omega_\infty}\right]\times\tor\right)} = 0$. Then, for an arbitrary $T>0$, we argue on $[0,T_\star]$, $[T_\star, 2T_\star]$,  $[2T_\star, 3T_\star]$, $\ldots$ one by one; $T_\star = \frac{\gamma}{4\Omega_\infty}$. On each time interval we take the terminal data from the previous interval as the initial data. In this way, we obtain \eqref{suff} in Step~1 of the same proof, which suffices to conclude Theorem~\ref{thm: main}.

\smallskip
\noindent
{\bf Step~4.} The proof of \eqref{velocity convergence} follows \cite[Step~2, p.74]{cde} almost verbatim. We sketch it here only for the sake of completeness.

Recall from Proposition~\ref{propn-lem4} that \begin{align*}
    \frac{\|v(t)\|^2_{L^2}}{U^2} \leq 4 {\mathcal{K}}^{\frac{4 (t-t_0)\Omega_\infty}{\gamma}} \left\{\frac{\|v(t_0)\|^2_{L^2}}{U^2} + \frac{\gamma}{\Omega_\infty} \frac{\Omega_2^2}{U^2} \nu\right\}^{1-{\frac{20 (t-t_0)\Omega_\infty}{\gamma}}},
\end{align*}
where $v := u^\nu-u$. Discretise the time interval $[0,t]$ by $t_j=t_{j-1}+\Delta t$, $j \in \{1,2,\ldots,n\}$, where 
\begin{equation*}
 \Delta t = \frac{1}{2c},\qquad c \equiv \frac{20\Omega_\infty}{\gamma}.
\end{equation*}
Then, $y_j \equiv y(t_j) \equiv  {\|v(t_j)\|^2_{L^2}}{U^{-2}}$ satisfies
\begin{align*}
    y_j \leq C_1\sqrt{y_{j-1}+ C_2\nu}\qquad \text{for each }j \in \{1,2,\ldots,n\},
\end{align*}
where $C_1 = 4\mathcal{K}^{40}$ and $C_2 = \frac{\gamma}{\Omega_\infty} \frac{\Omega_2^2}{U^2}$. Solving this iterative inequality, the quantities
\begin{align*}
    \delta_n = \frac{y_n+C_2\nu}{C_1^2},\qquad \tnu = \frac{C_2\nu}{C_1^2}
\end{align*}
satisfy 
\begin{align}\label{x}
    \delta_n \leq (\delta_0)^{2^{-n}} + \frac{\tnu^{2^{-n+1}}}{1-\tnu}.
\end{align}
Moreover, whenever $\tnu \leq (\sqrt{5}-1)^{-1}$ and $\delta_0 \in ]0,r_\star[$ where $r_\star=r_\star(\tnu)$ is the positive root of $x^2-x-\tnu=0$, then $\sup_{n \in \mathbf{N}}\delta_n \leq r_\star$. Such smallness conditions are easily satisfied by choosing $\|u_0^\nu-u_0\|_{L^2_x}$ and $\nu$ suitably small, in terms of constants depending only on $C_1$ and $C_2$. 

To conclude, fix any $t>0$ and set 
\begin{align*}
    n = \left\lfloor \frac{t}{\Delta t}\right\rfloor + 1 = \left\lfloor \frac{40\Omega_\infty}{\gamma}t\right\rfloor + 1.
\end{align*}
Since $x \mapsto x^{2^{-n}}$ is subadditive, we deduce from \eqref{x} that
\begin{align*}
    \|(u^\nu-u)(t)\|_{L^2_x}^2 &\leq C_1^2 U^2 \left\{ \frac{\|u^\nu_0-u_0\|_{L^2_x}^2}{C_1^2U^2} + 2 \frac{C_2}{C_1^2}\nu \right\}^{2^{\frac{-40\Omega_\infty}{\gamma}t}}\\
    & = \left(16\mathcal{K}^{80}U^2\right)^{1-e^{\frac{-40 \log 2\cdot \Omega_\infty}{\gamma}t}} \left\{\|u^\nu_0-u_0\|_{L^2_x}^2 + 2C_2U^2\nu \right\}^{e^{\frac{-40 \log 2\cdot \Omega_\infty}{\gamma}t}}.
\end{align*}
The right-hand side converges to zero as $\nu \searrow 0$, which yields \eqref{velocity convergence}.

This completes the proof of Theorem~\ref{thm: main}.    \end{proof}

\section{Comments}\label{sec: comments}

Several conclusion remarks are in order.

\begin{enumerate}
    \item 
As with the inviscid limit theorem established in \cite{cde} for Navier--Stokes, the results in this paper work only for the periodic domain $\tor$. For general bounded domains, Prandtl boundary layers will develop under the no-slip boundary condition; so, in general, the strong inviscid limit $u^\nu \to u$ in $\bigcap_{p \in [1,\infty[}L^\infty_t L^p_x$ fails near the boundary. However, the strong inviscid limit may remain valid for slip type boundary conditions. For example, under the Navier boundary condition, $u^\nu \to u$ at a rate $\mathcal{O}(\sqrt{\nu})$ in suitable weighted  Sobolev spaces. See Li--Wang \cite{mengni}, Bleitner--Carlson--Nobili \cite{bdry}, and others.

\item 
Motivated by Nguyen~\cite[Theorems~1.3 and 1.4]{nguyen} (which improves \cite[Corollary~1]{cde}), we expect that the solution semigroup $S_t: \linf\brator \to  \linf\brator$, $S_t(\omega_0) = \omega(t,\bullet)$ for the Euler--Boussinesq Equation~\eqref{eq: bk0} is $L^p \to L^p$ continuous for every $1 \leq p <\infty$ and $(\linf, w^\star) \to (\linf, w^\star)$ continuous, where $w^\star$ denotes the weak-star topology on $\linf\brator$.

    \item 
The 2D Boussinesq equations were first proposed as a  model for oceanographic and atmospheric fluid flows \cite{ped}, so it is of physical interest to study them on surfaces with non-Euclidean geometry. See Saito \cite{saito} and Li--Wu--Zhao \cite{lwz} for the analysis of Boussinesq equations on 2-sphere and general compact surfaces. We expect that the results in this paper remain valid for Boussinesq equations on compact surfaces without boundaries (with $\etwo$ replaced by any unit normal vector field in \eqref{eq: bkn}, \eqref{eq: bk0}).

\item 
It is also of mathematical and physical interest to investigate the inviscid limit for variants of 2D Boussinesq equations with partial dissipation, or with modified damping mechanisms. See, \emph{e.g.}, \cite{2,3,4,5,10,15,21,c8,c10,34,35,x,c11,c12,45,46,47,48,c2,49,59,61}. 
    
\end{enumerate}

\medskip

\noindent
{\bf Acknowledgement}. The research of SL is supported by NSFC Projects 12331008 $\&$ 12411530065, Young Elite Scientists Sponsorship Program by CAST 2023QNRC001, the National Key Research $\&$ Development Programs 2023YFA1010900 and 2024YFA1014900, Shanghai Rising-Star Program 24QA2703600, Shanghai Qi-Guang Scholarship, and Shanghai Frontiers Science Center of Modern Analysis.

\medskip
\noindent
{\bf Competing Interests Statement}. We declare that there are no conflicts of interest involved.

\medskip
\noindent
{\bf Data Availability Statement}. We declare that no data are associated with this work.


\begin{thebibliography}{99}

\bibitem{1}
H. Abidi and T. Hmidi, On the global well-posedness for Boussinesq system, \textit{J. Diff. Equ.} \textbf{233} (2007), 199--220.

\bibitem{c6}
D. Adhikari, O. Ben Said, U.~R. Pandey, and J. Wu, Stability and large-time behavior for the 2D Boussineq system with horizontal dissipation and vertical thermal diffusion, \textit{NoDEA Nonlinear Differential Equations Appl.} \textbf{29} (2022), Paper No. 42, 43 pp.

\bibitem{2}
D. Adhikari, C. Cao, H. Shang, J. Wu, X. Xu, and Z. Ye, Global regularity results for the 2D
Boussinesq equations with partial dissipation, \textit{J. Diff. Equ.} \textbf{260} (2016), 1893--1917.

\bibitem{3}
D. Adhikari, C. Cao, and J. Wu, The 2D Boussinesq equations with vertical viscosity and
vertical diffusivity, \textit{J. Diff. Equ.} \textbf{249} (2010), 1078--1088.


\bibitem{4}
D. Adhikari, C. Cao, and J. Wu, Global regularity results for the 2D Boussinesq equations
with vertical dissipation, \textit{J. Diff. Equ.} \textbf{251} (2011), 1637--1655.

\bibitem{5}
D. Adhikari, C. Cao, J. Wu, and X. Xu, Small global solutions to the damped two-dimensional
Boussinesq equations, \textit{J. Diff. Equ.} \textbf{256} (2014), 3594--3613.

\bibitem{c4}
O. Ben Said, U.~R. Pandey, and J. Wu, The stabilizing effect of the temperature on buoyancy-driven fluids, \textit{Indiana Univ. Math. J.} \textbf{71} (2022), 2605--2645.

\bibitem{10}
A. Biswas, C. Foias, and A. Larios, On the attractor for the semi-dissipative Boussinesq
equations, \textit{Ann. Inst. H. Poincaré Anal. Non Linéaire} \textbf{34} (2017), 381--405.

\bibitem{bdry}
F. Bleitner, E. Carlson, and C. Nobili, Large-time behavior of the 2D thermally non-diffusive Boussinesq equations with Navier-slip boundary conditions, \textit{Z. Angew. Math. Phys.} \textbf{76} (2025), Paper No. 58, 36 pp.

\bibitem{14}
J.R. Cannon and E. DiBenedetto, The initial value problem for the Boussinesq equations
with data in $L^p$, Approximation Methods for Navier-Stokes Problems (Proc. Sympos., Univ. Paderborn, Paderborn, 1979), \textit{Lecture Notes in Math.} \textbf{771}, Springer, Berlin, 1980, 129--144.

\bibitem{15}
C. Cao and J. Wu, Global regularity for the 2D anisotropic Boussinesq equations with vertical dissipation, \textit{Arch. Ration. Mech. Anal.} \textbf{208} (2013), 985--1004.


\bibitem{chae}
D. Chae, Global regularity for the 2D Boussinesq equations with partial viscosity terms, \textit{Adv.
Math.} \textbf{203} (2006), 497--513.

\bibitem{17}
D. Chae, P. Constantin, and J. Wu, An incompressible 2D didactic model with singularity and explicit solutions of the 2D Boussinesq equations, \textit{J. Math. Fluid Mech.} \textbf{16} (2014), 473--480.

\bibitem{18}
D. Chae and H. Nam, Local existence and blow-up criterion for the Boussinesq equations,
\textit{Proc. Roy. Soc. Edinburgh Sect. A} \textbf{127} (1997), 935--946.


\bibitem{19}
D. Chae, S. Kim, and H. Nam, Local existence and blow-up criterion of H\"{o}lder continuous solutions of the Boussinesq equations, \textit{Nagoya Math. J.} \textbf{155} (1999), 55--80.


\bibitem{20}
D. Chae and O.~Y. Imanuvilov, Generic solvability of the axisymmetric 3-D Euler equations
and the 2-D Boussinesq equations, \textit{J. Diff. Equ.} \textbf{156} (1999), 1--17.

\bibitem{21}
D. Chae and J. Wu, The 2D Boussinesq equations with logarithmically supercritical velocities, \textit{Adv. Math.} \textbf{230} (2012), 1618--1645.

\bibitem{c1}
J. Cheng, R. Ji, L. Tian, and J. Wu,  Stability and enhanced decay rate for 3D anisotropic Boussinesq equations near the hydrostatic balance, \textit{J. Differential Equations} \textbf{425} (2025), 300--341.

\bibitem{22}
D. Córdoba, C. Fefferman, and R. De La Llave, On squirt singularities in hydrodynamics,
 \textit{SIAM J. Math. Anal.} \textbf{36} (2004), 204--213.


\bibitem{cde}
P. Constantin, T.~D. Drivas, and T.~M. Elgindi, Inviscid limit of vorticity distributions in the Yudovich class, \textit{Comm. Pure Appl. Math.} \textbf{75} (2022), 60--82.


\bibitem{DP0}
R. Danchin and M. Paicu, Le th\'{e}or\`{e}me de Leray et le th\'{e}or\`{e}me de Fujita--Kato pour le syst\`{e}me de Boussinesq partiellement visqueux, \textit{Bull. Soc. Math. France} \textbf{136} (2008), 261--309. 

\bibitem{DP1}
R. Danchin and M. Paicu, Existence and uniqueness results for the Boussinesq system with
data in Lorentz spaces, \textit{Phys. D}, \textbf{237} (2008), 1444--1460. 


\bibitem{DP}
R. Danchin and M. Paicu, Global well-posedness issues for the inviscid Boussinesq system
with Yudovich’s type data, \textit{Comm. Math. Phys.} \textbf{290} (2009), 1--14.


\bibitem{DP2}
R. Danchin and M. Paicu, Global existence results for the anisotropic Boussinesq system in
dimension two, \textit{Math. Models Methods Appl. Sci.} \textbf{21} (2011), 421--457.

\bibitem{c8}
W. Deng, J. Wu, and P. Zhang,  Stability of Couette flow for 2D Boussinesq system with vertical dissipation, \textit{J. Funct. Anal.} \textbf{281} (2021),  Paper No. 109255, 40 pp.

\bibitem{dwzz}
C. Doering, J. Wu, K. Zhao, and X. Zheng, Long-time behavior of two-dimensional Boussinesq
equations without buoyancy diffusion, \textit{Phys. D} \textbf{376/377} (2018), 144--159.

\bibitem{c10}
B. Dong, J. Wu, X. Xu, and N. Zhu, Stability and exponential decay for the 2D anisotropic Boussinesq equations with horizontal dissipation, 
\textit{Calc. Var. Partial Differential Equations} \textbf{60} (2021), Paper No. 116, 21 pp.


\bibitem{27}
W. E and C.-W. Shu, Small-scale structures in Boussinesq convection, \textit{Phys. Fluids} \textbf{6} (1994),
49--58.

\bibitem{ej}
T.~M. Elgindi and I.-J. Jeong, Ill-posedness for the incompressible Euler equations in critical Sobolev spaces, \textit{Ann. PDE} \textbf{3} (2017), no.~1, Paper no.~7, 19pp.

\bibitem{guo}
B. Guo, Spectral method for solving two-dimensional Euler--Boussinesq equation, \textit{Acta Math. Appl. Sinica} \textbf{5} (1989), 27--50.

\bibitem{32}
T. Hmidi and S. Keraani, On the global well-posedness of the 2D Boussinesq system with a
zero diffusivity, \textit{Adv. Diff. Equ.} \textbf{12} (2007), 461--480.

\bibitem{33}
T. Hmidi and S. Keraani, On the global well-posedness of the Boussinesq system with zero
viscosity, \textit{Indiana Univ. Math. J.} \textbf{58} (2009), 1591--1618.

\bibitem{34}
T. Hmidi, S. Keraani, and F. Rousset, Global well-posedness for a Boussinesq-Navier-Stokes
system with critical dissipation, \textit{J. Diff. Equ.} \textbf{249} (2010), 2147--2174.

\bibitem{35}
T. Hmidi, S. Keraani, and F. Rousset, Global well-posedness for Euler-Boussinesq system with
critical dissipation, \textit{Comm. PDE} \textbf{36} (2011), 420--445.


\bibitem{36}
T. Hou and C. Li, Global well-posedness of the viscous Boussinesq equations, \textit{Disc. Cont.
Dyn. Sys.} \textbf{12} (2005), 1--12.


\bibitem{37}
L. Hu and H. Jian, Blow-up criterion for 2-D Boussinesq equations in bounded domain, \textit{Front. Math. China} \textbf{2} (2007), 559--581.


\bibitem{38}
W. Hu, I. Kukavica, and M. Ziane, Persistence of regularity for a viscous Boussinesq equations with zero diffusivity, \textit{Asymptot. Anal.} \textbf{91} (2015), 111--124.



\bibitem{39}
W. Hu, I. Kukavica, and M. Ziane, On the regularity for the Boussinesq equations in a bounded
domain, \textit{J. Math. Phys.} \textbf{54} (2013), 081507, 10 pp.

\bibitem{x}
R. Ji, D. Li, and J. Wu, Uniqueness of weak solutions to the Boussinesq equations with fractional dissipation, \textit{Commun. Math. Sci.} \textbf{21} (2023), 1531--1548.

\bibitem{c7}
R. Ji, Y. Li, and J. Wu, 
Optimal decay for the 3D anisotropic Boussinesq equations near the hydrostatic balance,
 \textit{Calc. Var. Partial Differential Equations} \textbf{61} (2022), Paper No. 136, 34 pp.

\bibitem{41}
Q. Jiu, C. Miao, J. Wu, and Z. Zhang, The 2D incompressible Boussinesq equations with
general critical dissipation, \textit{SIAM J. Math. Anal.} \textbf{46} (2014), 3426--3454.


\bibitem{42}
Q. Jiu, J. Wu, and W. Yang, Eventual regularity of the two-dimensional Boussinesq equations
with supercritical dissipation, \textit{J. Nonlinear Science} \textbf{25} (2015), 37--58.



\bibitem{44}
M. Lai, R. Pan, and K. Zhao, Initial boundary value problem for 2D viscous Boussinesq
equations, \textit{Arch. Ration. Mech. Anal.} \textbf{199} (2011), 739--760.

\bibitem{c11}
S. Lai, J. Wu, X. Xu, J. Zhang, and Y. Zhong, Optimal decay estimates for 2D Boussinesq equations with partial dissipation, \textit{J. Nonlinear Sci.} \textbf{31} (2021), Paper No. 16, 33 pp.


\bibitem{c12}
S. Lai, J. Wu, X. Xu, and Y. Zhong,  Stability and large-time behavior of the 2D Boussinesq equations with partial dissipation, 
\textit{J. Differential Equations} \textbf{271} (2021), 764--796.


\bibitem{45}
A. Larios, E. Lunasin, and E. S. Titi, Global well-posedness for the 2D Boussinesq system
with anisotropic viscosity and without heat diffusion, \textit{J. Diff. Equ.} \textbf{255} (2013), 2636--2654.


\bibitem{46}
D. Li and X. Xu, Global wellposedness of an inviscid 2D Boussinesq system with nonlinear
thermal diffusivity, \textit{Dyn. Par. Diff. Equ.} \textbf{10} (2013), 255–265.


\bibitem{47}
J. Li, H. Shang, J. Wu, X. Xu, and Z. Ye, Regularity criteria for the 2D Boussinesq equations
with supercritical dissipation, \textit{Comm. Math. Sci.} \textbf{14} (2016), 1999--2022.


\bibitem{48}
J. Li and E. S. Titi, Global well-posedness of the 2D Boussinesq equations with vertical dissipation, \textit{Arch. Ration. Mech. Anal.} \textbf{220} (2016), 983--1001.

\bibitem{mengni}
M. Li and Y.-L. Wang, Zero-viscosity limit for Boussinesq equations with vertical viscosity and Navier boundary in the half plane, \textit{Nonlinear Anal. Real World Appl.} \textbf{80} (2024), Paper No. 104150, 16 pp.

\bibitem{lwz}
S. Li, J. Wu, and K. Zhao, On the degenerate Boussinesq equations on surfaces, \textit{J. Geom. Mech.} \textbf{12} (2020), 107--140.

\bibitem{c2}
H. Lin, X. Suo, M. Jin, and J. Wu, Global well-posedness and decay estimates for the 3D anisotropic Boussinesq system with thermal damping, \textit{J. Differential Equations} \textbf{453} (2026), part 3, Paper No. 113863, 28 pp.

\bibitem{49}
S. A. Lorca and J. L. Boldrini, The initial value problem for a generalized Boussinesq model,
\textit{Nonlinear Analysis} \textbf{36} (1999), 457--480.


\bibitem{c5}
L. Ma, J. Wu, and Q. Zhang, Stability of 3D perturbations near a special 2D solution to the rotating Boussinesq equations, \textit{Stud. Appl. Math.} \textbf{148} (2022), 1624--1655.

\bibitem{mof}
H.~K. Moffatt, Some remarks on topological fluid mechanics, in: \textit{An Introduction to the Geometry and Topology of Fluid Flows}, R.~L. Ricca, ed., Dordrecht: Kluwer Academic Publishers, 2011, pp 3--10.

\bibitem{nguyen}
H.~Q. Nguyen, Remarks on the solution map for Yudovich solutions of the Euler equations, \textit{J. Math. Fluid Mech.} \textbf{24} (2022), Paper No. 44, 9 pp.

\bibitem{ped}
J. Pedlosky, \textit{Geophysical Fluid Dynamics}, New York: Pringer-Verlag, 1987.

\bibitem{51}
A. Sarria and J. Wu, Blowup in stagnation-point form solutions of the inviscid 2d Boussinesq
equations, \textit{J. Diff. Equ.} \textbf{259} (2015), 3559--3576.

\bibitem{saito}
J. Saito, Boussinesq equations in thin spherical domains, \textit{Kyushu J. Math.} \textbf{59} (2005), 443--465.

\bibitem{sww}
C. Seis, E. Wiedemann, and J. Woźnicki, Strong convergence of vorticities in the 2D viscosity limit on a bounded domain, \textit{J. Nonlinear Sci.} \textbf{36} (2026), Paper No. 22.

\bibitem{52}
A. Stefanov and J. Wu, A global regularity result for the 2D Boussinesq equations with critical dissipation, \textit{J. d’Analyse Mathematique} \textbf{137} (2019), 269--290.

\bibitem{b1}
A. Stefanov, J. Wu, X. Xu, and Z. Ye, Global regularity results of the 2D fractional Boussinesq equations, \textit{Math. Ann.} \textbf{391} (2025), 5965--6012.

\bibitem{53}
Y. Taniuchi, A note on the blow-up criterion for the inviscid 2-D Boussinesq equations, in: R.
Salvi (Ed.), {The Navier-Stokes Equations: Theory and Numerical Methods}, \textit{Lec. Notes Pure Appl. Math.} \textbf{223} (2002), 131--140.

\bibitem{54}
L. Tao, J. Wu, K. Zhao, and X. Zheng, Stability near hydrostatic equilibrium to the 2D Boussinesq equations without thermal diffusion, \textit{Arch. Ration. Mech. Anal.} \textbf{237} (2020), 585--630.


\bibitem{58}
J.~P. Whitehead and C.~R. Doering, Ultimate state of two-dimensional Rayleigh-Bénard
convection between free-slip fixed-temperature boundaries, \textit{Phys. Rev. Lett.} \textbf{106} (2011), 244501.

\bibitem{59}
J. Wu and X. Xu, Well-posedness and inviscid limits of the Boussinesq equations with fractional Laplacian dissipation, \textit{Nonlinearity} \textbf{27} (2014), 2215--2232.


\bibitem{60}
J. Wu, X. Xu, L. Xue, and Z. Ye, Regularity results for the 2D Boussinesq equations with
critical and supercritical dissipation, \textit{Comm. Math. Sci.} \textbf{14} (2016), 1963--1997.

\bibitem{c9}
J. Wu and Q. Zhang, Stability and optimal decay for a system of 3D anisotropic Boussinesq equations, \textit{Nonlinearity} \textbf{34} (2021), 5456--5484.

\bibitem{c3}
J. Wu and K. Zhao, On 2D incompressible Boussinesq systems: global stabilization under dynamic boundary conditions, \textit{J. Differential Equations} \textbf{367} (2023), 246--289.
 
\bibitem{61}
W. Yang, Q. Jiu, and J. Wu, Global well-posedness for a class of 2D Boussinesq systems with fractional dissipation, \textit{J. Diff. Equ.} \textbf{257} (2014), 4188--4213.


\bibitem{yud}
V.~I. Yudovich, Non-stationary flows of an ideal incompressible fluid, \textit{Ž. Vyčisl. Mat i Mat. Fiz.} \textbf{3} (1963), 1032--1066.

\bibitem{zhao}
K. Zhao, 2D inviscid heat conductive Boussinesq system in a bounded domain, \textit{Michigan
Math. J.} \textbf{59} (2010), 329--352.

\bibitem{a1}
D. Zhou and J. Wu, $H^1$-uniqueness, stability and decay for anisotropic Navier-Stokes and Boussinesq equations, 
\textit{J. Differential Equations} \textbf{453} (2026), part 5, Paper No. 113930, 44 pp.

\end{thebibliography}
\end{document}